\newcommand\abs[1]{\ensuremath{\lvert #1\rvert}}
\newcommand{\g}{(G,\gamma)}
\newcommand{\arb}{\operatorname{arb}}
\newcommand{\ord}{\operatorname{ord}}
\newtheorem{theorem}{Theorem}[section]
\newtheorem{lemma}[theorem]{Lemma}
\newtheorem*{THMMAIN}{Theorem~\ref{thm:longcyclepos}}
\newtheorem*{THMMAIN2}{Theorem~\ref{thm:findingkt}}
\newtheorem*{THMMAIN3}{Theorem~\ref{thm:lower}}
\newtheorem{corollary}[theorem]{Corollary}
\newtheorem*{claim*}{Claim}
\title{Group vertex-arboricity of group-labelled graphs}
\author{O-joung Kwon}
\author{Xiaopan Lian}
\address[O-joung Kwon]{Department of Mathematics, Hanyang University, Seoul, South Korea and Discrete Mathematics Group, Institute~for~Basic~Science~(IBS), Daejeon,~South~Korea}
\address[Xiaopan Lian]{Center for Combinatorics and LPMC, Nankai University, Tianjin, China}
\email{ojoungkwon@hanyang.ac.kr}
\email{xiaopanlian@mail.nankai.edu.cn}
\thanks{ The first author is supported by the National Research Foundation of Korea (NRF) grant funded by  the Ministry of Science and ICT (No. NRF-2021K2A9A2A11101617 and No. RS-2023-00211670) and by the Institute for Basic Science (IBS-R029-C1). }
\thanks{The second author is supported by National Natural Science Foundation of China (No. 12161141006).}
\date{\today}
\begin{document}
\begin{abstract}
We introduce the vertex-arboricity of group-labelled graphs. For an abelian group $\Gamma$, a $\Gamma$-labelled graph is a graph whose edges are labelled by elements of $\Gamma$. For an abelian group $\Gamma$ and $A\subseteq \Gamma$, the $(\Gamma, A)$-vertex-arboricity of a $\Gamma$-labelled graph is the minimum integer $k$ such that its vertex set can be partitioned into $k$ parts where each part induces a subgraph having no cycle of value in $A$. 

We prove that for every positive integer $\omega$, there is a function $
f_{\omega}:\mathbb{N}\times\mathbb{N}\to \mathbb{R}$ such that if $|\Gamma\setminus A|\le \omega$, then every $\Gamma$-labelled graph with $(\Gamma, A)$-vertex-arboricity at least $f_{\omega}(t,d)$ contains a subdivision of $K_t$ where all branching paths are of value in $A$ and of length at least $d$. 
This extends a well-known result that every graph of sufficiently large chromatic number contains a subdivision of $K_t$, in various directions.

\end{abstract}
\maketitle

\section{Introduction}

The \emph{vertex-arboricity} of a graph $G$ is the minimum integer $k$ such that the vertex set of $G$ is partitioned into $k$ parts so that every part induces a forest. Chartrand, Kronk, and Wall~\cite{Chartrand1968} first introduced this concept, and showed that planar graphs have vertex-arboricity at most $3$. Clearly, a class of graphs has bounded chromatic number if and only if it has bounded vertex-arboricity, as every forest can be partitioned into two independent sets. One may consider a variation of vertex-arboricity by considering cycles with certain properties. For example, the \emph{dichromatic number} of a digraph, which considers partitions into subdigraphs without directed cycles, is a directed analogue of vertex-arboricity. It was introduced by Neumann-Lala~\cite{Neumann1982} in 1982. 

Mader~\cite{Mader} proved that for every positive integer $t$, there is an integer $d(t)$ such that every graph of average degree at least $d(t)$ contains a subdivision of $K_t$. This implies that $K_t$-subdivision free graphs have bounded chromatic number and bounded vertex-arboricity. 
Aboulker et al.~\cite{Aboulker2019} proved a directed analogue that for every positive integer $t$, there is an integer $D(t)$ such that every digraph of dichromatic number at least $D(t)$ contains a subdivision of bidirected complete digraph on $t$ vertices. Thomassen~\cite{Thomassen1983} and Steiner~\cite{Steiner2022} generalized these results by showing that one can further obtain a subdivision of a complete graph/digraph with some modularity constraints.

In this paper, we generalize the vertex-arboricity of graphs to group-labelled graphs. Let~$\Gamma$ be an abelian group. 
A \emph{$\Gamma$-labelled graph} $(G, \gamma)$ is a pair of a graph~$G$ and a function~${\gamma \colon E(G) \to \Gamma}$.
For a $\Gamma$-labelled graph $(G,\gamma)$ and a subset $A$ of $\Gamma$,  the \emph{$(\Gamma, A)$-vertex-arboricity} of $(G, \gamma)$, denoted by $\arb_{(\Gamma, A)}(G,\gamma)$, is the minimum integer $k$ such that the vertex set of $G$ can be partitioned into $k$ parts where each part induces a subgraph having no cycle of $\gamma$-value in $A$, where the $\gamma$-value of a subgraph is the sum of $\gamma(e)$ over all edges $e$ of the subgraph.
If $A=\emptyset$, then this parameter is always at most $1$.  Thus, we will always assume that $A\neq \emptyset$.
In Section~\ref{sec:unbounded}, we observe that the class of all $\Gamma$-labelled graphs has unbounded $(\Gamma, A)$-vertex-arboricity whenever $A\neq \emptyset$.

Since the $(\Gamma, A)$-vertex-arboricity of a $\Gamma$-labelled graph $(G, \gamma)$ is at most the vertex-arboricity of its underlying graph $G$, one can find a subdivision of $K_t$ in every $\Gamma$-labelled graph of sufficiently large $(\Gamma, A)$-vertex-arboricity.
Can we find structures related to the given group-labelling?
In this direction, we raise the following specific questions: 
\begin{itemize}
    \item Can we always find a subdivision of $K_t$ where each branching path has $\gamma$-value in $A$ in every $\Gamma$-labelled graph with sufficiently large $(\Gamma, A)$-vertex-arboricity?
    \item Can we always find a long cycle of $\gamma$-value in $A$ in every $\Gamma$-labelled graph with sufficiently large $(\Gamma, A)$-vertex-arboricity?
\end{itemize}  

Our main results are that both questions are yes when $\Gamma\setminus A$ is finite. Let $(G, \gamma)$ be a $\Gamma$-labelled graph and $A\subseteq \Gamma$. We say that a subdivision of $K_t$ in $G$ is an $(A,d)$-subdivision of $K_t$ if every branching path is of $\gamma$-value in $A$ and of length at least $d$. We say that a cycle in $G$ is an $(A,d)$-cycle if its $\gamma$-value is in $A$ and it has length at least $d$.

\begin{theorem}\label{thm:findingkt}
For every positive integer $\omega$, there is a function $f_{\omega}:\mathbb{N}\times\mathbb{N}\to \mathbb{R}$ satisfying the following. 
Let $\Gamma$ be an abelian group, let $A\subseteq \Gamma$ be a non-empty set with $\abs{\Gamma\setminus A}\le \omega$, and let $(G, \gamma)$ be a $\Gamma$-labelled graph. If $\arb_{(\Gamma, A)}(G, \gamma)\ge f_\omega(t,d)$, then $G$ contains an $(A,d)$-subdivision of $K_t$.
\end{theorem}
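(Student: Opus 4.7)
I would prove the theorem by induction on $\omega = |\Gamma \setminus A|$, using Mader's $K_t$-subdivision theorem for the base case and a combination of Ramsey's theorem with a Steiner-routing trick in the inductive step.

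\textbf{Base case $\omega = 0$.} Since $A = \Gamma$, the quantity $\arb_{(\Gamma, A)}(G, \gamma)$ is exactly the ordinary vertex-arboricity of $G$, and every branching path of any subdivision has $\gamma$-value in $A$ automatically; it therefore suffices to produce a $K_t$-subdivision in which every branching path has length at least $d$. Since vertex-arboricity at least $v$ forces degeneracy at least $2v - 1$ and hence a subgraph of minimum degree at least $2v - 1$, Mader's theorem yields a subdivision of $K_N$ with $N := t + (d - 1)\binom{t}{2}$ once $v$ is sufficiently large. Inside this $K_N$-subdivision I would designate $t$ branching vertices as outer and partition the remaining $(d - 1)\binom{t}{2}$ into groups of size $d - 1$ indexed by pairs of outer vertices; routing each new branching path through its $d - 1$ assigned Steiner vertices via $d$ consecutive old branching paths gives the required $K_t$-subdivision with every branching path of length at least $d$.

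\textbf{Inductive step.} Fix $x \in \Gamma \setminus A$ and put $B := A \cup \{x\}$. Since any $(\Gamma, B)$-forest is also a $(\Gamma, A)$-forest, $\arb_{(\Gamma, B)}(G, \gamma) \geq \arb_{(\Gamma, A)}(G, \gamma)$. Setting $f_\omega(t, d) := f_{\omega - 1}(R(t, t + \omega \binom{t}{2}), d)$ with $R(\cdot, \cdot)$ the Ramsey number, the inductive hypothesis applied to $(\Gamma, B)$ (for which $|\Gamma \setminus B| \leq \omega - 1$) yields a $(B, d)$-subdivision $S$ of $K_r$ with $r := R(t, t + \omega \binom{t}{2})$. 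Every branching path of $S$ has $\gamma$-value in $A$ or equal to $x$; I would 2-color the $\binom{r}{2}$ branching paths accordingly and apply Ramsey to the complete graph on the branching vertices. A monochromatic ``in-$A$'' clique of size $t$ restricts $S$ to the required $(A, d)$-subdivision of $K_t$. Otherwise we obtain a $K_{r'}$-subdivision with $r' = t + \omega \binom{t}{2}$ in which every branching path has $\gamma$-value exactly $x$ and length at least $d$. Pigeonhole on $x, 2x, \ldots, (\omega + 1)x$ combined with $|\Gamma \setminus A| \leq \omega$ produces $k \in \{1, \ldots, \omega + 1\}$ with $kx \in A$, and I would then reapply the Steiner-routing construction inside the $K_{r'}$-subdivision: designate $t$ outer branching vertices, partition the remaining $\omega \binom{t}{2} \geq (k - 1)\binom{t}{2}$ into groups of size $k - 1$, and concatenate $k$ old branching paths per pair to obtain new branching paths of $\gamma$-value $kx \in A$ and length at least $kd \geq d$.

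\textbf{Main obstacle.} The pigeonhole producing $k \leq \omega + 1$ with $kx \in A$ is immediate when $x$ has infinite order or when $0 \in A$ (take $k = \ord(x)$ in the latter case). The delicate case, which I expect to be the technical core of the argument, is when $0 \notin A$ and the chosen $x \in \Gamma \setminus A$ satisfies $\langle x \rangle \subseteq \Gamma \setminus A$: no multiple of $x$ lies in $A$ and straight concatenation fails. I would handle this by quotienting by the finite subgroup $\langle x \rangle$ (of order at most $\omega$), obtaining an induced labelling in $\Gamma / \langle x \rangle$ whose bad set satisfies $|\bar\Gamma \setminus \bar A| \leq \omega / |\langle x \rangle|$, which is strictly less than $\omega$ whenever $x \neq 0$ and so unlocks a secondary induction. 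The residual subcase $x = 0$ must be handled separately, likely by iterating the construction on $G - V(S)$ and exploiting the fact that only $\omega$ distinct bad group elements can ever appear across the iterates. Organizing this last case cleanly, together with the bookkeeping needed to close the recursion for $f_\omega$, is where I expect the main technical effort of the proof to lie.
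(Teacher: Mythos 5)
Your base case is fine, and the first half of your inductive step runs parallel to the paper's own argument: the paper likewise Ramsey-colors the branching paths of a large subdivision by their $\gamma$-values, and when a large sub-clique has all paths of a common value $g$ with $\langle g\rangle\cap A\neq\emptyset$, it concatenates $p\le\omega+1$ of them through dedicated intermediate branching vertices to realize $pg\in A$ (this is exactly Lemma~\ref{lem:spanA}). You also correctly isolate the crux: a common value $x$ with $\langle x\rangle\cap A=\emptyset$. But your resolution of that case has a genuine gap: quotienting by $\Lambda=\langle x\rangle$ cannot certify membership in $A$, because $A$ need not be a union of $\Lambda$-cosets. Concretely, with $\bar A$ taken to be the set of cosets that meet $A$ (the definition your bound $\abs{\bar\Gamma\setminus\bar A}\le\omega/\abs{\langle x\rangle}$ presupposes), the hypothesis transfers --- any partition valid for the quotient labelling is valid for $(\Gamma,A)$, so the quotient arboricity is at least $\arb_{(\Gamma,A)}(G,\gamma)$ --- but the conclusion does not: the inductive hypothesis returns branching paths whose $\gamma$-values are only guaranteed to lie in $A+\Lambda$, and a value $a+jx$ with $a\in A$, $j\neq 0$ can fall outside $A$ (e.g.\ $\Gamma=\mathbb{Z}_4$, $A=\{1\}$, $x=2$: the quotient accepts paths of value $3$). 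If instead $\bar A$ is the set of cosets contained in $A$, the conclusion is strong enough but the hypothesis direction fails (the quotient arboricity can be $1$ while $\arb_{(\Gamma,A)}(G,\gamma)$ is huge), and the bad-set size need not drop below $\omega$ either. Either way the secondary induction does not close. Your fallback for $x=0$, iterating on $G-V(S)$, also fails: $V(S)$ has no bounded size (branching paths are only bounded from below), and Lemma~\ref{lem:deletingvertex} loses one unit of arboricity per deleted vertex, so $G-V(S)$ may have $(\Gamma,A)$-vertex-arboricity $1$.

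The paper escapes this trap without quotienting. Before building any subdivision it reserves, via Corollary~\ref{cor:longpath2} and Lemma~\ref{lem:disjointcycles1}, many disjoint routing layers and many vertex-disjoint cycles of $\gamma$-value in $A$ inside a region $T_2$ disjoint from the intended branching vertices. The key observation, Corollary~\ref{cor:longpath3}(2), is that for any subgroup $\Lambda$ with $\Lambda\cap A=\emptyset$, routing two branching vertices through one reserved cycle $C$ with $\gamma(C)\in A$ must yield a connecting path of value outside $\Lambda$ --- otherwise the four path identities would force $\gamma(C)\in\Lambda$, a contradiction. Iterating this (at most $\omega$ times, by a maximality argument) builds a subdivision of the multigraph $K^{k+1}_{r_{k+1}(t)}$ whose parallel branching paths carry distinct values $a_1,\dots,a_{k+1}\in\Gamma\setminus A$ generating a subgroup that meets $A$; an element $s=\sum_j p_ja_j\in A$ with all $p_j\le\omega$ is then realized by concatenating, for each pair of terminals, $p_j$ parallel paths of each value $a_j$. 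Some device of this kind --- manufacturing paths whose values escape the subgroup generated so far, rather than collapsing that subgroup --- is what your proposal is missing.
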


\begin{theorem}\label{thm:longcyclepos}
For every positive integer $\omega$, there is a function $g_{\omega}:\mathbb{N}\to \mathbb{R}$ satisfying the following. 
Let $\Gamma$ be an abelian group, let $A\subseteq \Gamma$ be a non-empty set with $\abs{\Gamma\setminus A}\le \omega$, and let $(G, \gamma)$ be a $\Gamma$-labelled graph. If $\arb_{(\Gamma, A)}(G, \gamma)\ge g_\omega(d)$, then $G$ contains an $(A,d)$-cycle.
\end{theorem}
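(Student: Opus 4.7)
The plan is to reduce Theorem~\ref{thm:longcyclepos} to Theorem~\ref{thm:findingkt}. I would set $g_\omega(d) := f_\omega(T(\omega), d)$ for an integer $T(\omega)$ to be chosen below, so that the hypothesis yields, via Theorem~\ref{thm:findingkt}, an $(A, d)$-subdivision of $K_{T(\omega)}$ in $G$ with branching vertices $v_1, \dots, v_{T(\omega)}$ and branching paths $P_{ij}$ of length at least $d$ and $\gamma$-value $a_{ij} \in A$. Since any cycle on the auxiliary complete graph $K_{T(\omega)}$ (with edge labels $a_{ij}$) lifts, by concatenating branching paths, to a cycle in $G$ of length at least $d$ and $\gamma$-value equal to the edge-label sum around the cycle, the problem reduces to the following combinatorial claim: \emph{for $T$ sufficiently large in terms of $\omega$, any labelling $a\colon E(K_T) \to A$ with $|\Gamma\setminus A| \le \omega$ admits a cycle whose edge-label sum lies in $A$.}

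To prove this claim, I would argue by contradiction: suppose every cycle has label sum in $B := \Gamma\setminus A$. Color each triangle of $K_T$ by its value in $B$, a palette of size at most $\omega$. The $3$-uniform hypergraph Ramsey theorem then gives, for $T$ large, a set $S$ of size $s$ on which all triangles share a common value $b \in B$. Picking $v_1 \in S$ and setting $c_i := a(v_1 v_i)$, the triangle equation $c_i + c_j + a(v_i v_j) = b$ yields $a(v_i v_j) = b - c_i - c_j$; applying the same equation to triangles not through $v_1$ forces $c_i + c_j + c_k \equiv b \pmod{\Gamma[2]}$ for all distinct $i,j,k \in S \setminus \{v_1\}$, from which $c_i \equiv c \pmod{\Gamma[2]}$ for a single $c \in \Gamma$. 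Writing $c_i = c + \epsilon_i$ with $\epsilon_i \in \Gamma[2]$, a direct computation shows that every $k$-cycle on $S$ has $\gamma$-value $k(b-2c)$; letting $k$ range over $\{3, \dots, s-1\}$ the assumption forces $\langle b - 2c\rangle \subseteq B$, so in particular $|\langle b-2c \rangle| \le \omega$.

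The edge labels now take the explicit form $a(v_i v_j) = (b-2c) - (\epsilon_i + \epsilon_j)$, and if some pair $i \ne j$ in $S$ satisfies $\epsilon_i = \epsilon_j$ then $a(v_iv_j) = b - 2c \in \langle b-2c\rangle \subseteq B$, contradicting $a(v_iv_j) \in A$. So the $\epsilon_i$'s are pairwise distinct, and their pairwise sums avoid the forbidden set $\{(b-2c) - \beta : \beta \in B\}$ inside $\Gamma[2]$. The main obstacle is to convert this into a genuine contradiction when $\Gamma[2]$ is infinite: the $\epsilon_i$'s can a priori be chosen distinct inside the cofinite subset $\Gamma[2] \cap (A - c)$, and no immediate pigeonhole applies.

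My proposed remedy is to iterate the argument: the reduced labelling $\epsilon_i + \epsilon_j$ on $S$ is an $A'$-valued edge-labelling of $K_s$ over the elementary abelian $2$-group $\Gamma[2]$, with $A' := \Gamma[2] \setminus ((b-2c) - B)$ again a cofinite target of complement at most $\omega$, so one can re-apply the Ramsey analysis inside $\Gamma[2]$. A termination bound comes from controlling the depth of this $2$-torsion descent -- at each step the ambient group is replaced by its $2$-torsion subgroup and either the descent terminates (giving a finite coset in which pigeonhole applies) or we arrive at a strictly simpler instance whose structure forces a label into $B$. Tracking the number of Ramsey and descent steps then determines $T(\omega)$ and hence $g_\omega(d)$; this descent is the part of the argument where care is most needed, especially to ensure that the Ramsey-type functions involved remain bounded by a function of $\omega$ alone.
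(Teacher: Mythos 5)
Your reduction to Theorem~\ref{thm:findingkt} rests on the combinatorial claim that for $T$ large in terms of $\omega$ alone, every labelling $a\colon E(K_T)\to A$ admits a cycle whose label sum lies in $A$. This claim is false, and the failure is exactly the $2$-torsion configuration you flag as ``the main obstacle'': it is not a technical obstacle in your proof, it is a counterexample to the claim. Let $\Gamma$ be an infinite elementary abelian $2$-group, let $B=\Gamma\setminus A=\{0\}$ (so $\omega=1$), choose distinct elements $\epsilon_1,\dots,\epsilon_T\in\Gamma$, and set $a(v_iv_j):=\epsilon_i+\epsilon_j$. Every edge label is nonzero, hence lies in $A$; but in any cycle each vertex is incident with exactly two of its edges, so the label sum is $2\sum_i\epsilon_{w_i}=0\notin A$. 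Thus no $T(\omega)$ can exist, for any $T$. Your proposed remedy---descending to the reduced labelling $\epsilon_i+\epsilon_j$ over the $2$-torsion subgroup---cannot terminate, because this counterexample is a fixed point of the descent: the ambient group equals its own $2$-torsion subgroup, and the ``reduced instance'' has exactly the same form as the original one. (Your Ramsey analysis up to the explicit form $a(v_iv_j)=(b-2c)-\epsilon_i-\epsilon_j$ is essentially correct; it is the hoped-for contradiction that does not exist. This is precisely why the paper remarks, right after stating the two theorems, that Theorem~\ref{thm:findingkt} does not directly imply Theorem~\ref{thm:longcyclepos}, and why the last theorem of Section~\ref{sec:adsubdivision} needs the hypothesis that $\Gamma$ has boundedly many elements of order at most $2$ before an $(A,1)$-subdivision of a large $K_t$ is guaranteed to contain a long $A$-cycle.)

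Consequently the theorem has to be proved from structure in $G$ beyond an $(A,d)$-subdivision, and this is what the paper's own proof keeps. Using Corollary~\ref{cor:longpath2} it finds a set $S_m$ of still-large $(\Gamma,A)$-vertex-arboricity together with pairwise disjoint levels $L_1,\dots,L_m$, takes $\omega+2$ vertex-disjoint cycles $C_1,\dots,C_{\omega+2}$ of $\gamma$-value in $A$ inside $G[S_m]$ (Lemma~\ref{lem:disjointcycles1}), and joins consecutive cycles by long, internally disjoint paths routed through distinct levels. Each $C_i$ carries two arcs $P_i,P_i'$ between chosen vertices $a_i,b_i$; if the $\omega+1$ long cycles $H_1,\dots,H_{\omega+1}$ assembled from these pieces all had values in $\Gamma\setminus A$, two of them would share a value by pigeonhole, and exchanging the arc $P_{a+1}$ for the complementary arc $P_{a+1}'$ then yields a long cycle of value exactly $\gamma(P_{a+1})+\gamma(P_{a+1}')=\gamma(C_{a+1})\in A$. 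The point is that a pair of parallel arcs of an $A$-valued cycle is a gadget whose two values sum, by construction, to an element of $A$; that survives arbitrary $2$-torsion, whereas any argument using only the values of internally disjoint $A$-valued paths (such as the branching paths of a subdivision) cannot, as the example above shows.
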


Note that Theorem~\ref{thm:findingkt} does not directly imply Theorem~\ref{thm:longcyclepos}, as we cannot guarantee that the sum of certain values in $A$ is again in $A$. 
We prove in Section~\ref{sec:adsubdivision} that if $\Gamma$ has bounded number of elements of order $2$, then every $(A, 1)$-subdivision of $K_t$ with large $t$ contains an $(A,d)$-cycle. 
Also, Theorem~\ref{thm:findingkt} extends the known fact that every graph of sufficiently large chromatic number contains a subdivision of $K_t$. To see this, we can take $(\Gamma, A)=(\mathbb{Z}, \mathbb{Z}\setminus \{0\})$ and give labels $1$ to all edges in a graph. We point out that the functions in Theorems~\ref{thm:findingkt} and~\ref{thm:longcyclepos} do not depend on the choice of~${\Gamma}$ and~$A$.

We are far from characterizing all pairs $(\Gamma, A)$ that admit such functions, when $\Gamma\setminus A$ is infinite. Sometimes, we can reduce to Theorem~\ref{thm:findingkt} or~\ref{thm:longcyclepos}; for example, if $\Gamma=\mathbb{Z}$ and $A$ is the set of odd integers, then we can equivalently take $(\Gamma, A)=(\mathbb{Z}_2, \{1\})$, which can be seen as taking a quotient group by even integers.
The next theorem tells that for some pair of $\Gamma$ and $A$, the function in Theorem~\ref{thm:longcyclepos} does not exist.

\begin{theorem}\label{thm:lower}
Let $d$ and $t$ be positive integers. Let $\Gamma$ be an abelian group and let $A\subseteq \Gamma$ be a non-empty set such that there is an element $x\in \Gamma$ where $d$ is the unique integer greater than $2$ for which $dx\in A$. Then there is a $\Gamma$-labelled graph $(G, \gamma)$ such that 
\begin{itemize}
    \item $\arb_{(\Gamma, A)}(G, \gamma)\ge t$ and, 
    \item $G$ has no cycle of length at least $d+1$ and $\gamma$-value in $A$.
\end{itemize}
\end{theorem}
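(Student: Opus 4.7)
The plan is to use the simplest possible construction: label every edge of a large complete graph with the element $x$ supplied by the hypothesis. The key observation is that if $\gamma \equiv x$, then any cycle $C$ of length $k$ has $\gamma$-value $kx$, and since the underlying graph is simple we have $k \ge 3$; by hypothesis, $kx \in A$ holds precisely when $k = d$. This alignment is exactly what the theorem asks us to exploit, so the construction essentially writes itself.

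With this setup, the second bullet is almost free. Any cycle of length $k \ge d+1$ satisfies $k > 2$ and $k \ne d$, so by the uniqueness clause in the hypothesis we get $kx \notin A$; hence $G$ contains no cycle of length at least $d+1$ with $\gamma$-value in $A$.

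For the arboricity lower bound, I would take $G = K_n$ with $n = (t-1)(d-1) + 1$ and $\gamma \equiv x$. By the pigeonhole principle, any partition of $V(G)$ into at most $t-1$ classes has some class of size at least $d$; this class induces a copy of $K_d$ in $G$, which contains a cycle of length exactly $d$, and such a cycle has $\gamma$-value $dx \in A$. Thus no partition into fewer than $t$ classes avoids a cycle of $\gamma$-value in $A$, giving $\arb_{(\Gamma, A)}(G, \gamma) \ge t$.

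The main (non-)obstacle: there is very little to overcome here, because the uniqueness hypothesis on $x$ is engineered precisely so that a uniform labelling distinguishes the length-$d$ cycles from all longer ones. The only care needed is to note that we are working with simple graphs, so the exceptional cases $k = 1$ and $k = 2$ (not covered by the ``greater than $2$'' clause of the hypothesis) never arise as cycle lengths, and to handle the trivial corner $t = 1$ separately (or by choosing $n \ge d$ in all cases).
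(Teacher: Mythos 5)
Your proposal is correct and is essentially identical to the paper's proof: the same graph $K_n$ with $n=(t-1)(d-1)+1$, the same constant labelling $\gamma\equiv x$, the same observation that the uniqueness hypothesis forces every cycle of value in $A$ to have length exactly $d$, and the same pigeonhole argument for the arboricity lower bound. Your added remarks on the cases $k\in\{1,2\}$ and $t=1$ are harmless refinements of details the paper leaves implicit.
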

For instance, if $\Gamma=\mathbb{Z}$ and $A=\{100\}$, then we can take $x=1$ and $d=100$. By Theorem~\ref{thm:lower}, there is a $\Gamma$-labelled graph of arbitrary large $(\Gamma, A)$-vertex-arboricity which has no cycle with $\gamma$-value in $A$ and of length at least $d+1$.
This means that 
there is no function $g$ satisfying that every $\Gamma$-labelled graph of $(\Gamma, A)$-vertex-arboricity at least $g(t)$ contains a cycle of $\gamma$-value in $A$ and length at least $t$ for all $t\ge 1$.

We present a specific application of Theorem~\ref{thm:findingkt}. Let $G$ be a graph and $F$ be a subset of the edge set of $G$. Let $\eta(G, F)$ be the minimum integer $k$ such that the vertex set of $G$ can be partitioned into $k$ parts where each part induces a subgraph having no cycle $C$ where $|E(C)\cap F|\equiv 1\pmod 2$.
The property of the cycle $C$ can be formulated as a $\mathbb{Z}_2$-labelling with $A=\{1\}$, where edges of $F$ are labelled by $1$ and other edges are labelled by $0$. 
Since $|\Gamma\setminus A|=1$, Theorem~\ref{thm:findingkt} implies that if $\eta(G,F)\ge f_1(t,1)$, then $G$ contains a subdivision of $K_t$, where each branching path contains odd number of edges of $F$.

This paper is organized as follows. 
In Section~\ref{sec:prelim}, we introduce some preliminary concepts including group-labelled graphs. 
In Section~\ref{sec:unbounded}, we show that if $A$ is a non-empty set, then the set of all $\Gamma$-labelled graphs has unbounded $(\Gamma, A)$-vertex-arboricity. 
In Section~\ref{sec:leveling}, we discuss how to find a certain leveling of a graph that is useful to create a subdivision of $K_t$. 
In Section~\ref{sec:longcycle}, we prove Theorems~\ref{thm:longcyclepos} and~\ref{thm:lower}, and in Section~\ref{sec:adsubdivision}, we prove Theorem~\ref{thm:findingkt}.

\vskip 1cm

\section{Preliminaries}\label{sec:prelim}

For an integer~$m$, we write~${[m]}$ for the set of positive integers at most~$m$. For a set $A$ and an integer $m$, we denote by ${A\choose m}$ the set of all subsets of $A$ of size $m$.

In this paper, all graphs are undirected graphs that have no loops and multiple edges, unless stated otherwise.  
Let~$G$ be a graph. 
We denote by~${V(G)}$ and~${E(G)}$ the vertex set and the edge set of~$G$, respectively. 
For a set~$A$ of vertices in~$G$, let~${G[A]}$ be the subgraph of~$G$ induced by~$A$, and let~${G - A}$ be the graph obtained from~$G$ by deleting all the vertices in~$A$ and all edges incident with vertices in~$A$.
If~${A = \{v\}}$, then we write~${G - v}$ for~${G - A}$. 
For two graphs~$G$ and~$H$, let 
$G \cup H := (V(G) \cup V(H), E(G) \cup E(H))$ and $G \cap H := (V(G) \cap V(H), E(G) \cap E(H))$.

A graph~$G$ is a \emph{subdivision} of a graph~$H$ if~$G$ can be obtained from~$H$ by subdividing edges repeatedly. The vertices of $H$ in $G$ are called \emph{branching vertices}, and a path of $G$ whose endpoints are branching vertices and all other vertices are not branching vertices is called a \emph{branching path}.

Let~$A$ and~$B$ be sets of vertices in~$G$. 
An \emph{${(A, B)}$-path} in a graph $G$ is a path from a vertex in~$A$ to a vertex in~$B$ such that all internal vertices are not contained in~${A \cup B}$. 
If $A$ (or $B$) is a singleton set $\{v\}$, then we may replace $A$ (or $B$) with $v$.
An~${(A,A)}$-path of length at least $2$ is shortly denoted as an \emph{$A$-path}. 

A sequence $(L_0, L_1, \ldots, L_p)$ of disjoint sets of
vertices in a connected graph $G$ is called a \emph{leveling} in $G$ if
\begin{itemize}
    \item $\abs{L_0}=1$ and $L_0\cup L_1\cup \cdots \cup L_p=V(G)$, and 
    \item for every $i\in [p]$, every vertex in $L_i$ has a neighbor in $L_{i-1}$ and has no neighbor in $L_0\cup \cdots \cup L_{i-2}$.
\end{itemize}
We can obtain a leveling
 by choosing a vertex $v$, and taking $L_i$ as the set of all vertices at distance $i$ from $v$. The vertex in $L_0$ is called the \emph{starting vertex} of $(L_0, L_1, \ldots, L_p)$.

 Let $K_n$ be the complete graph on $n$ vertices. For positive integers $n_1, \ldots, n_r$, let $R({n_1},\ldots,{n_r})$ be the smallest integer $n$ such that if the edges of $K_n$ is colored by elements in $[r]$, then $K_n$ contains a complete subgraph on $n_i$ vertices whose all edges are colored by $i$ for some $i\in [r]$. The existence of $R({n_1},\ldots,{n_r})$ is guaranteed by Ramsey’s Theorem~\cite{Ramsey1930}. We write $R(n;q)=R(n,n,\ldots, n)$ where $n$ appears $q$ times.

\subsection{Group-labelled graphs}

For every abelian group, we regard its operation as an additive operation and denote its zero by~$0$.

Let $\Gamma$ be an abelian group. Let $\Omega\subseteq\Gamma$. The subgroup of $\Gamma$ generated by $\Omega$ is denoted by  $\langle\Omega\rangle$. If $\Omega=\{g\}$, then we write $\langle g\rangle$ for $\langle\Omega\rangle$. 
The \emph{order} of an element $g\in \Gamma$ is denoted by $\ord_{\Gamma}(g)$. A subgroup $\Lambda$ of $\Gamma$ is \emph{trivial} if $\Lambda=\Gamma$ or $\Lambda=\emptyset$. A \emph{subgraph} of a $\Gamma$-labelled graph~${(G,\gamma)}$ is a $\Gamma$-labelled graph~$(H,\gamma')$ such that~$H$ is a subgraph of~$G$ and~$\gamma'$ is the restriction of~$\gamma$ to~$E(H)$. 
By a slight abuse of notation, we may refer to this $\Gamma$-labelled graph by~$(H,\gamma)$. 

For a $\Gamma$-labelled graph~${(G,\gamma)}$ and a subgraph~${H}$ of ${G}$, let~$\gamma(H)=\sum_{e \in E(H)} \gamma(e)$. We say that the value is the \emph{$\gamma$-value of~$H$}.

For a $\Gamma$-labelled graph~${(G, \gamma)}$ and a subgroup~$\Lambda$ of~$\Gamma$, 
the \emph{induced $(\Gamma/\Lambda)$-labelling} of~$(G,\gamma)$ is 
the~${\Gamma/\Lambda}$\nobreakdash-labelling~$\lambda$ defined by~${\lambda(e) := \gamma(e) + \Lambda}$ for all edges~${e}$ of $G$.

We say that a subdivision of $K_t$ in $G$ is an $(A,d)$-subdivision of $K_t$ if every branching path is of $\gamma$-value in $A$ and of length at least $d$. We say that a cycle in $G$ is an $(A,d)$-cycle if its $\gamma$-value is in $A$ and it has length at least $d$.

We recall that for a $\Gamma$-labelled graph $(G,\gamma)$ and a subset $A$ of $\Gamma$,  the $(\Gamma, A)$-vertex-arboricity of $(G, \gamma)$, denoted by $\arb_{(\Gamma, A)}(G,\gamma)$, is the minimum integer $k$ such that $V(G)$ can be partitioned into $k$ parts where each part induces a subgraph having no cycle of $\gamma$-value in $A$. 

The following directly follows from the definition.

\begin{lemma}\label{lem:comp}
Let $\Gamma$ be an abelian group and let $A\subseteq \Gamma$ be a non-empty set. Let $(G,\gamma)$ be a $\Gamma$-labelled graph. Then $\arb_{(\Gamma, A)}(G, \gamma)=\max\{\arb_{(\Gamma, A)}(G_i, \gamma): G_i \text{ is a connected component of }G\}$.
\end{lemma}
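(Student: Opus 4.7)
The plan is to prove the equality by showing both inequalities separately, exploiting the simple observation that every cycle in $G$ is contained entirely within a single connected component.

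First I would establish the inequality $\arb_{(\Gamma, A)}(G, \gamma) \ge \max_i \arb_{(\Gamma, A)}(G_i, \gamma)$. Fix an optimal partition $V(G) = V_1 \cup \cdots \cup V_k$ witnessing $k = \arb_{(\Gamma, A)}(G, \gamma)$, so that no induced subgraph $G[V_j]$ contains a cycle of $\gamma$-value in $A$. For any component $G_i$, consider the restricted partition $V(G_i) = (V_1 \cap V(G_i)) \cup \cdots \cup (V_k \cap V(G_i))$ (ignoring empty parts). Each induced subgraph $G_i[V_j \cap V(G_i)]$ is a subgraph of $G[V_j]$, hence contains no cycle of $\gamma$-value in $A$. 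Thus $\arb_{(\Gamma, A)}(G_i, \gamma) \le k$ for every component $G_i$.

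For the reverse inequality, let $k = \max_i \arb_{(\Gamma, A)}(G_i, \gamma)$. For each component $G_i$, fix a partition $V(G_i) = V_1^{(i)} \cup \cdots \cup V_{k}^{(i)}$ (padding with empty sets if the arboricity of $G_i$ is strictly less than $k$) such that each $G_i[V_j^{(i)}]$ has no cycle of $\gamma$-value in $A$. Now define $V_j := \bigcup_i V_j^{(i)}$ for each $j \in [k]$, which gives a partition of $V(G)$. Any cycle in $G[V_j]$ lies in a single component and hence in some $G_i[V_j^{(i)}]$, so it cannot have $\gamma$-value in $A$. Therefore $\arb_{(\Gamma, A)}(G, \gamma) \le k$.

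There is no real obstacle here; the lemma reduces to the structural fact that cycles respect the component decomposition, and the argument is essentially a bookkeeping of partitions. The only point to handle carefully is allowing empty parts when a component has strictly smaller arboricity than the maximum, which is why the statement is phrased with $\max$ rather than componentwise.
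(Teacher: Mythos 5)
Your proof is correct: both inequalities are established by the standard restriction/combination of partitions, using the fact that every cycle lies within a single connected component. The paper offers no written proof at all (it states that the lemma ``directly follows from the definition''), and your argument is precisely the routine verification being left implicit, so there is nothing to compare beyond noting that you have filled in the omitted details correctly.
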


The following lemma tells that whenever we have a leveling $(L_0, L_1, \ldots, L_p)$ of a connected $\Gamma$-labelled graph $(G, \gamma)$, then there is a part $L_i$ having $(\Gamma, A)$-vertex-arboricity at least half of the one of $G$. Together with Lemma~\ref{lem:comp}, we can find a connected component in $G[L_i]$ which has large $(\Gamma, A)$-vertex-arboricity.
This will be used in several places.

\begin{lemma}\label{lem:lel}
Let $\Gamma$ be an abelian group and let $A\subseteq \Gamma$ be a non-empty set. Let $\g$ be a $\Gamma$-labelled graph. If $G$ is a connected graph with a leveling $(L_0, L_1, \ldots,L_p)$, then there are $i\in [p]$ and $Y\subseteq L_i$ such that
\begin{itemize}
\item $G[Y]$ is a connected component of $G[L_i]$, and 
\item $\arb_{(\Gamma,A)}(G[Y],\gamma)\ge \left\lceil\frac{ \arb_{(\Gamma,A)}(G,\gamma)}{2}\right\rceil$. 
\end{itemize}
\end{lemma}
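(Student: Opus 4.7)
The plan is to partition the leveling into its odd-indexed and even-indexed slices and exploit the fact that same-parity levels are pairwise non-adjacent in $G$, then derive a contradiction through Lemma~\ref{lem:comp}. Set $k := \arb_{(\Gamma,A)}(G,\gamma)$ and write $V_{\text{odd}} := \bigcup_{i \text{ odd}} L_i$ and $V_{\text{even}} := \bigcup_{i \text{ even}} L_i$.

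First I would record the structural consequence of the leveling axioms: since each vertex of $L_i$ has no neighbor in $L_0 \cup \cdots \cup L_{i-2}$, the graph $G$ has no edge between $L_i$ and $L_j$ whenever $|i-j| \geq 2$. Consequently $G[V_{\text{odd}}]$ is the disjoint union of the graphs $G[L_i]$ over odd $i$, and likewise $G[V_{\text{even}}]$ is the disjoint union of $G[L_i]$ over even $i$. Two applications of Lemma~\ref{lem:comp} then give
\[
\arb_{(\Gamma,A)}(G[V_{\text{odd}}], \gamma) \;=\; \max_{i\text{ odd}}\;\max_{Y}\;\arb_{(\Gamma,A)}(G[Y], \gamma),
\]
where $Y$ ranges over the connected components of $G[L_i]$, together with the analogous identity for $V_{\text{even}}$.

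Next I would argue by contradiction, assuming that every connected component $Y$ of every $G[L_i]$ satisfies $\arb_{(\Gamma,A)}(G[Y], \gamma) \leq \lceil k/2 \rceil - 1$. The identity above would then force both $\arb_{(\Gamma,A)}(G[V_{\text{odd}}], \gamma)$ and $\arb_{(\Gamma,A)}(G[V_{\text{even}}], \gamma)$ to be at most $\lceil k/2 \rceil - 1$. Concatenating optimal $(\Gamma,A)$-partitions of $G[V_{\text{odd}}]$ and $G[V_{\text{even}}]$ yields a partition of $V(G)$ into at most $2(\lceil k/2 \rceil - 1)$ classes, each still inducing no $A$-cycle because $V_{\text{odd}}$ and $V_{\text{even}}$ are disjoint and so every class lies entirely in one of the two sides. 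A routine parity check gives $2(\lceil k/2 \rceil - 1) \leq k - 1$ for every $k \geq 1$, contradicting $\arb_{(\Gamma,A)}(G,\gamma) = k$.

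The one step requiring care is the structural claim in the first paragraph --- that same-parity levels are mutually non-adjacent in $G$. Without it, combining color classes across different $L_i$'s of the same parity could create an $A$-cycle using edges between two distinct levels, and then the doubling bound $2(\lceil k/2 \rceil - 1)$ would no longer control $\arb_{(\Gamma,A)}(G,\gamma)$. The ceiling in $\lceil k/2 \rceil$ is also essential when $k$ is odd, where $2(\lceil k/2 \rceil - 1) = k - 1$ is just barely less than $k$; weakening it to $\lfloor k/2 \rfloor$ would destroy the contradiction.
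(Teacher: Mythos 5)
Your proof is correct and follows essentially the same route as the paper's: split the leveling into odd and even levels, use that levels at distance at least two are non-adjacent, combine optimal partitions of the two sides to contradict the assumed bound, and invoke Lemma~\ref{lem:comp} to pass to a connected component. The only cosmetic difference is that you phrase the contradiction hypothesis at the level of components (applying Lemma~\ref{lem:comp} upfront) rather than at the level of the sets $L_i$ as the paper does.
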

\begin{proof}
Let \[k=\left\lceil\frac{ \arb_{(\Gamma,A)}(G,\gamma)}{2}\right\rceil.\] This implies $\arb_{(\Gamma,A)}(G,\gamma)\in \{2k-1, 2k\}$. Let $(G_1,\gamma)$ and $(G_2,\gamma)$ be the subgraphs of $(G,\gamma)$ induced by the odd levels and even levels, respectively.

We claim that there exists $i\in [p]$ such that $\arb_{(\Gamma,A)}(G[L_i],\gamma)\ge k$.
Suppose for contradiction that for every $i\in [p]$, $\arb_{(\Gamma, A)}(G[L_i],\gamma)\le k-1$. 
Since there is no edge between $L_{i_1}$ and $L_{i_2}$ when $|i_1-i_2|\ge 2$, it implies that $\arb_{(\Gamma, A)}(G_1,\gamma)\le k-1$ and $\arb_{(\Gamma, A)}(G_2,\gamma)\le k-1$. 
Since we can take different sets of colors for $G_1$ and $G_2$, we have $\arb_{(\Gamma, A)}(G,\gamma)\le \arb_{(\Gamma, A)}(G_1,\gamma)+\arb_{(\Gamma, A)}(G_2,\gamma)\le 2k-2$, contradicting the fact that $\arb_{(\Gamma,A)}(G,\gamma)\in \{2k-1, 2k\}$. 

Therefore, there is some $i\in [p]$ such that $\arb_{(\Gamma, A)}(G[L_i],\gamma)\ge k$. 
By Lemma~\ref{lem:comp}, there is a vertex set $Y\subseteq L_i$ such that $G[Y]$ is a connected component of $G[L_i]$ and $\arb_{(\Gamma, A)}(G[Y],\gamma)=\arb_{(\Gamma, A)}(G[L_i],\gamma)\ge k$. This proves the lemma.
\end{proof}
\begin{lemma}\label{lem:deletingvertex}
Let $\Gamma$ be an abelian group and let $A\subseteq \Gamma$ be a non-empty set. Let $\g$ be a $\Gamma$-labelled graph and $v\in V(G)$. Then $\arb_{(\Gamma,A)}(G-v,\gamma)\ge \arb_{(\Gamma,A)}(G,\gamma)-1$.     
\end{lemma}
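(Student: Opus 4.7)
The plan is to produce, from any valid partition of $V(G-v)$ witnessing $\arb_{(\Gamma,A)}(G-v,\gamma)$, a valid partition of $V(G)$ with at most one extra part by simply putting $v$ into a part of its own. This is the most direct route: a singleton part induces an edgeless subgraph, which trivially has no cycle of $\gamma$-value in $A$.

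More precisely, I would set $k := \arb_{(\Gamma,A)}(G-v,\gamma)$ and fix a partition $V_1, \ldots, V_k$ of $V(G-v)$ such that for every $i \in [k]$ the induced subgraph $(G-v)[V_i]$ contains no cycle of $\gamma$-value in $A$. Note that $(G-v)[V_i] = G[V_i]$ because $v \notin V_i$, so $G[V_i]$ also has no cycle of $\gamma$-value in $A$. Now define $V_{k+1} := \{v\}$. Then $V_1, \ldots, V_k, V_{k+1}$ is a partition of $V(G)$, and $G[V_{k+1}]$ has no edges, hence no cycles at all. Consequently $\arb_{(\Gamma,A)}(G,\gamma) \le k+1$, giving the claimed inequality $\arb_{(\Gamma,A)}(G-v,\gamma) \ge \arb_{(\Gamma,A)}(G,\gamma)-1$.

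There is no real obstacle here: the only point to be careful about is verifying that restricting a $\Gamma$-labelling to an induced subgraph does not introduce any new cycles, which is immediate from the fact that cycles in an induced subgraph are cycles in the ambient graph with the same edge set and hence the same $\gamma$-value. The proof is essentially a one-line observation and does not require any of the leveling or Ramsey machinery developed earlier in the section.
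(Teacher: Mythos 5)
Your proof is correct and is essentially the same as the paper's: both rest on the single observation that adding the singleton part $\{v\}$ to a valid partition of $G-v$ yields a valid partition of $G$, so $\arb_{(\Gamma,A)}(G,\gamma)\le \arb_{(\Gamma,A)}(G-v,\gamma)+1$. The paper merely phrases this as a proof by contradiction, while you argue directly; the content is identical.
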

\begin{proof}
    Suppose for contradiction that $\arb_{(\Gamma,A)}(G-v,\gamma)\le \arb_{(\Gamma,A)}(G,\gamma)-2$. By adding one more part for $v$, we have that $\arb_{(\Gamma,A)}(G,\gamma)\le \arb_{(\Gamma,A)}(G-v,\gamma)+1\le \arb_{(\Gamma,A)}(G,\gamma)-1$, a contradiction. 
\end{proof}

We prove a lemma saying that if a $\Gamma$-labelled graph $(G, \gamma)$ has large $(\Gamma, A)$-vertex-arboricity, then $G$ contains many vertex-disjoint cycles which are of $\gamma$-value in $A$. 

\begin{lemma}\label{lem:disjointcycles1}
Let $t$ be a positive integer.
Let $\Gamma$ be an abelian group and let $A\subseteq \Gamma$ be a non-empty set.
Let $(G, \gamma)$ be a $\Gamma$-labelled graph. 
If $\arb_{(\Gamma, A)}(G, \gamma)\ge 2t$, then $G$ contains $t$ vertex-disjoint cycles whose $\gamma$-values are in $A$.
\end{lemma}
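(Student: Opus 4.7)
My plan is to argue by induction on $t$. For the base case $t=1$, observe that $\arb_{(\Gamma,A)}(G,\gamma)\ge 2$ forces $V(G)$ itself to induce an $A$-cycle, since otherwise $V(G)$ would be a valid one-part partition and $\arb_{(\Gamma,A)}(G,\gamma)$ would equal $1$.

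For the inductive step, I assume the statement for $t-1$ and take $\arb_{(\Gamma,A)}(G,\gamma)\ge 2t$. The plan is to locate a single $A$-cycle $C$ with the property that $\arb_{(\Gamma,A)}(G-V(C),\gamma)\ge 2(t-1)$; then the induction hypothesis supplies $t-1$ vertex-disjoint $A$-cycles in $G-V(C)$, and appending $C$ yields the desired $t$ cycles. The key idea is to take $C$ to be a \emph{shortest} $A$-cycle in $G$, of length $\ell$. Any cycle contained in the induced subgraph $G[V(C)]$ has length at most $|V(C)|=\ell$, while by minimality any $A$-cycle there has length at least $\ell$; hence every $A$-cycle in $G[V(C)]$ spans $V(C)$. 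Consequently, for any fixed vertex $v\in V(C)$, neither $G[\{v\}]$ nor $G[V(C)\setminus\{v\}]$ contains an $A$-cycle.

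To finish, I would argue by contradiction: assume $\arb_{(\Gamma,A)}(G-V(C),\gamma)\le 2t-3$. Combining a valid $(2t-3)$-part partition of $V(G)\setminus V(C)$ with the two-part partition $\{v\},\,V(C)\setminus\{v\}$ of $V(C)$ produces a partition of $V(G)$ into $2t-1$ parts, each inducing a subgraph of $G$ with no $A$-cycle; this contradicts $\arb_{(\Gamma,A)}(G,\gamma)\ge 2t$. Hence $\arb_{(\Gamma,A)}(G-V(C),\gamma)\ge 2t-2$, and the induction closes. The main subtlety is splitting $G[V(C)]$ into two $A$-cycle-free pieces, and this is exactly where the minimality of $\ell$ is used: for an arbitrary $A$-cycle $C$, the induced subgraph $G[V(C)]$ could contain shorter $A$-cycles that survive a two-part partition, but choosing $C$ to be shortest rules this out.
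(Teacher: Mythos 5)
Your proof is correct and follows essentially the same route as the paper's: induction on $t$, choosing a \emph{shortest} cycle $C$ of $\gamma$-value in $A$, splitting $V(C)$ into $\{v\}$ and $V(C)\setminus\{v\}$, and counting parts to force $\arb_{(\Gamma,A)}(G-V(C),\gamma)\ge 2t-2$. You merely spell out more explicitly than the paper why minimality of $C$ guarantees that $G[V(C)\setminus\{v\}]$ has no cycle of $\gamma$-value in $A$, which is a welcome clarification but not a different argument.
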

\begin{proof}
We prove by induction on $t$. If $t=1$, then $\arb_{(\Gamma, A)}(G, \gamma)\ge 2$ and $G$ contains a cycle whose $\gamma$-value is in $A$. We assume that $t>1$.

Let $C$ be a shortest cycle of $G$ whose $\gamma$-value is in $A$, and let $v\in V(C)$. 
If $\arb_{(\Gamma, A)}(G-V(C), \gamma)\le 2t-3$, then $\arb_{(\Gamma, A)}(G, \gamma)\le 2t-1$ because both $G[v]$ and $G[V(C)\setminus \{v\}]$ contains no cycles which are of
$\gamma$-value in $A$. This contradicts the assumption that $\arb_{(\Gamma, A)}(G, \gamma)\ge 2t$. Thus, $\arb_{(\Gamma, A)}(G-V(C), \gamma)\ge 2t-2$.

By induction hypothesis, $G-V(C)$ contains $(t-1)$ vertex-disjoint cycles which are of  $\gamma$-value in~$A$. Thus, we obtain $t$ such cycles together with $C$.
\end{proof}

\section{Group-labelled graphs with large $(\Gamma, A)$-chromatic number}\label{sec:unbounded}

In this section, we show that if $A\neq \emptyset$, then  the set of all $\Gamma$-labelled graphs has unbounded $(\Gamma, A)$-vertex-arboricity.

\begin{lemma}
Let $\Gamma$ be an abelian group and let $A\subseteq \Gamma$ be a non-empty set. For every positive integer $t$, there is a $\Gamma$-labelled graph $(G,\gamma)$ with $\arb_{(\Gamma,A)}(G,\gamma)\ge t$. 
\end{lemma}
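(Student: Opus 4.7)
The plan is to give a short explicit construction. Fix $a \in A$ (possible since $A$ is non-empty), set $n := t^2$, let $G := K_n$, and partition $V(G)$ into $t$ groups $V^{(1)}, \ldots, V^{(t)}$ each of size $t$. Define $\gamma(uv) := a$ if $u$ and $v$ lie in the same group, and $\gamma(uv) := 0$ otherwise.

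To show $\arb_{(\Gamma, A)}(G, \gamma) \ge t$, I would fix an arbitrary partition of $V(G)$ into $t - 1$ parts $P_1, \ldots, P_{t-1}$ and locate a cycle of $\gamma$-value in $A$ inside one of them via a double pigeonhole. First, for each $j \in [t]$, the $t$ vertices of $V^{(j)}$ are distributed into $t-1$ parts, so some part contains at least two vertices of $V^{(j)}$; let $\pi(j) \in [t-1]$ be the index of such a part. Second, since $\pi \colon [t] \to [t-1]$ has larger domain than codomain, there exist distinct $j_1, j_2 \in [t]$ with $\pi(j_1) = \pi(j_2) =: i^{\ast}$. Pick $u, v \in V^{(j_1)} \cap P_{i^{\ast}}$ and $x \in V^{(j_2)} \cap P_{i^{\ast}}$; then $u, v, x$ span a triangle in $G[P_{i^{\ast}}]$ whose $\gamma$-value equals $\gamma(uv) + \gamma(ux) + \gamma(vx) = a + 0 + 0 = a \in A$, because $uv$ is an intra-group edge while $ux$ and $vx$ are inter-group edges. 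Hence the chosen partition fails, and since it was arbitrary, $\arb_{(\Gamma, A)}(G, \gamma) \ge t$ as required.

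The obstacle is purely conceptual: one has to guess that a complete graph with the above block-structured labelling forces a ``bad'' triangle whenever a single part of a $(t-1)$-partition happens to meet two different groups in at least two and one vertices respectively. After settling on this design, the verification amounts to two pigeonhole applications and uses nothing from the earlier sections; the vertex count $t^2$ is exactly what the nested pigeonholes demand.
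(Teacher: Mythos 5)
Your proof is correct, and it is a clean simplification of what the paper does. The paper's proof splits into two cases: first, when some $x\in\Gamma$ and some integer $q\ge 3$ satisfy $qx\in A$, it takes the complete graph $K_{(\ell-1)(t-1)+1}$ with every edge labelled $x$ (where $\ell$ is the least such $q$), and a single pigeonhole produces a part with $\ell$ vertices, hence a cycle of length $\ell$ and value $\ell x\in A$; second, in the remaining case it notes that any $y\in A$ must have infinite order and then builds exactly your graph ($K_{t^2}$ partitioned into $t$ cliques of size $t$, intra-clique edges labelled $y$, all other edges labelled $0$) and finds the same kind of triangle. Your observation is that this second construction already works for an arbitrary $a\in A$ with no hypothesis at all: the triangle $u,v,x$ has $\gamma$-value exactly $a+0+0=a\in A$, so neither the case analysis nor any consideration of element orders is needed. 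The counting differs only cosmetically --- the paper extracts one part with at least $t+1$ vertices and argues it meets one clique in two vertices and another clique in at least one, while you run a double pigeonhole giving a part meeting two cliques in at least two vertices each --- but both are valid. What the paper's first case buys, when it applies, is a smaller witness graph (about $\ell t$ vertices rather than $t^2$); what your route buys is a single unified construction and a strictly shorter argument.
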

\begin{proof}
Let $t$ be a positive integer. 

First, we consider the case when there are an element $x\in \Gamma$ and an integer $q\ge 3$ such that $q x\in A$. Let $\ell$ be the smallest integer in $\{q\in \mathbb{N}:q\ge 3,  \,q x\in A\}$. 
Let $G=K_{(\ell-1)({t-1})+1}$ and $\gamma:E(G)\rightarrow \Gamma$ such that $\gamma(e)=x$ for every edge $e\in E(G)$. 

We claim that $\arb_{(\Gamma, A)}(G,\gamma)\ge t$.
On the contrary, suppose that $\arb_{(\Gamma, A)}(G,\gamma)< t$ and so $G$ can be partitioned into at most $t-1$ parts $V_1, \ldots, V_{s}$ where each part induces a subgraph having no cycle of $\gamma$-value in $A$. 
Since $G$ has $(\ell-1)(t-1)+1$ vertices, by the pigeonhole principle, there is a part, say $V_i$, which contains at least $\ell$ vertices. Then $G[V_i]$ has a cycle of length $\ell$, and by the construction, its $\gamma$-value is $\ell x$ which is in $A$. This is a contradiction. Therefore, $\arb_{(\Gamma, A)}(G,\gamma)\ge t$.

Now, we may assume that there is no pair of an element $x\in \Gamma$ and an integer $\ell\ge 3$ for which $\ell x\in A$. 

Since $A\neq\emptyset$, there is an element in $A$. Let $y\in A$. If $\ord_{\Gamma}(y)$ is a positive integer, then there is an integer $\ell\ge 3$ such that $\ell y=y\in A$, contradicting the assumption.
So, $y$ has an infinite order.

Now, we define a $\Gamma$-labelled graph $(G,\gamma)$ with $\arb_{(\Gamma, A)}(G,\gamma)\ge k$ as follows. Let $G=K_{t^2}$ and let $H_1,\ldots,H_t$ be $t$ vertex-disjoint induced subgraphs of $G$, each isomorphic to $K_t$. Let $\gamma:E(G)\rightarrow \Gamma$ such that \begin{itemize}
    \item for every edge $e\in \bigcup_{i\in [t]}E(H_i)$, $\gamma(e)=y$, and 
    \item for every edge $e\in E(K_{t^2})\setminus \bigcup_{i\in [t]}E(H_i)$, $\gamma(e)=0$.
\end{itemize}

Suppose that $\arb_{(\Gamma, A)}(G,\gamma)<t$. 
Consider a partition of $G$ with at most $t-1$ parts, say $V_1, \ldots, V_s$. By the pigeonhole principle, there is a part, say $V_x$, which contains at least $t+1$ vertices. Since each $H_i$ consists of exactly $t$ vertices, $V_x$ intersects at least two subgraphs of $\{H_1, \ldots, H_t\}$. Furthermore, as $V_x$ has at least $t+1$ vertices, there is $j_1\in [s]$ so that $H_{j_1}$ contains two vertices $u$ and $v$ in $V_x$. Let $j_2\in [s]\setminus \{j_1\}$ where $H_{j_2}$ contains an element of $V_x$. Let $w\in V(H_{j_2})$.

Since $\gamma(uvwu)=\gamma(uv)+\gamma(vw)+\gamma(wu)=y\in A$, it means that $G[V_x]$ contains a cycle with $\gamma$-value in $A$, a contradiction. Therefore, $\arb_{(\Gamma, A)}(G,\gamma)\ge t$.
\end{proof}

\section{Lemmas on finding a subset $X$ with special $X$-paths}\label{sec:leveling}

In this section, we prove that if a $\Gamma$-labelled graph $(G, \gamma)$ has sufficiently large $(\Gamma, A)$-arboricity, then one can find a set $S$ of vertices where we can obtain a subdivision of a fixed graph whose branching vertices are in $S$ and branching paths are long.
Furthermore, in Corollay~\ref{cor:longpath3}, we show that if $\Lambda$ is some subgroup of $\Gamma$ where $\Lambda\cap A=\emptyset$, then we can obtain such a subdivision where the $\gamma$-value of each branching path is not in $\Lambda$. This is one of the key ideas of proving Theorem~\ref{thm:findingkt}.

As a starting point, we first argue that we can find a set $X$ of vertices in $G$ for which, $G[X]$ has large $(\Gamma, A)$-arboricity, and for any pair of vertices $x$ and $y$ in $X$, there is a long $X$-path whose endpoints are $x$ and $y$. 

\begin{lemma}\label{lem:longpath}
Let $\ell$ be a positive integer. Let $\Gamma$ be an abelian group and let $A\subseteq \Gamma$ be a non-empty set. Let $\g$ be a connected $\Gamma$-labelled graph. If $\arb_{(\Gamma, A)}(G,\gamma)\ge 2^{\ell}$, then $G$ contains a set $X$ of vertices
such that 
\begin{itemize}
    \item $G[X]$ is connected,
    \item $\arb_{(\Gamma,A)}(G[X],\gamma)\ge \frac{\arb_{(\Gamma,A)}(G,\gamma)}{2^\ell}$, and \item for every pair of distinct vertices $x$ and $y$ in $X$, there is an $X$-path in $G$ of length at least $\ell$ whose endpoints are $x$ and $y$.
\end{itemize}
\end{lemma}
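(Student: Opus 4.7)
The plan is to proceed by induction on $\ell$, using at each step a breadth-first search leveling together with Lemma~\ref{lem:lel}. For the base case $\ell=1$, I would fix a starting vertex $v_0\in V(G)$ and take the BFS leveling $(L_0,L_1,\dots,L_p)$. Lemma~\ref{lem:lel} produces some $i\in[p]$ and a connected component $G[Y]$ of $G[L_i]$ with $\arb_{(\Gamma,A)}(G[Y],\gamma)\ge \arb_{(\Gamma,A)}(G,\gamma)/2$; take $X=Y$. For any two distinct $x,y\in X$, each has a neighbor in $L_{i-1}$ (call them $p$ and $q$), and since the BFS tree rooted at $v_0$ witnesses that $G[L_0\cup\cdots\cup L_{i-1}]$ is connected, there is a path $R$ from $p$ to $q$ in $G[L_0\cup\cdots\cup L_{i-1}]$. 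The concatenation of the edge $xp$, the path $R$, and the edge $qy$ is a path of length at least $2$ whose internal vertices lie strictly below level $i$ and hence outside $X\subseteq L_i$; this is an $X$-path of length $\ge 2\ge \ell$.

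For the inductive step with $\ell\ge 2$, I would assume the statement for $\ell-1$ and start from $\arb_{(\Gamma,A)}(G,\gamma)\ge 2^\ell$. The same BFS plus Lemma~\ref{lem:lel} yields a connected component $G[Y]$ of some level $L_i$ with $\arb_{(\Gamma,A)}(G[Y],\gamma)\ge 2^{\ell-1}$. Applying the induction hypothesis to $G[Y]$ delivers a set $X\subseteq Y$ such that $G[X]$ is connected, $\arb_{(\Gamma,A)}(G[X],\gamma)\ge \arb_{(\Gamma,A)}(G[Y],\gamma)/2^{\ell-1}\ge \arb_{(\Gamma,A)}(G,\gamma)/2^\ell$, and any two vertices $x,y\in X$ are joined inside $G[Y]$ by an $X$-path $P=v_0v_1\cdots v_k$ of length $k\ge \ell-1$. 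To lift $P$ to an $X$-path in $G$ of length at least $\ell$, I would replace the last edge $v_{k-1}v_k$ by a detour through level $i-1$: pick neighbors $p\in L_{i-1}$ of $v_{k-1}$ and $q\in L_{i-1}$ of $v_k$, connect them by a path $R$ in $G[L_0\cup\cdots\cup L_{i-1}]$, and insert $p$, $R$, $q$ between $v_{k-1}$ and $v_k$.

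The main point to verify is that the resulting walk is a path whose interior avoids $X$: the vertices of $P$ all live in $Y\subseteq L_i$, while the detour $p, R, q$ lies in $L_0\cup\cdots\cup L_{i-1}$, so the two segments are automatically vertex-disjoint outside of the shared endpoints $v_{k-1},v_k$; internal vertices of $P$ belong to $Y\setminus X$ and those of the detour lie in lower levels, all outside $X$. The new length is at least $k+1\ge\ell$, giving the required $X$-path. The argument uses neither the labelling $\gamma$ nor the set $A$ (it is purely structural), so I expect no real obstacle beyond careful bookkeeping; the only delicate point is that each inductive step contributes exactly one unit to the path length via a single detour, which is precisely why the arboricity bound must halve per step and yields the factor $2^\ell$.
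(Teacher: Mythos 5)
Your proof is correct and follows essentially the same route as the paper: both iterate the ``BFS leveling plus Lemma~\ref{lem:lel}'' step $\ell$ times, halving the $(\Gamma,A)$-vertex-arboricity at each stage, and both obtain the length bound by forcing one detour through earlier BFS levels per stage. The only difference is organizational—the paper explicitly builds the nested sequence $X_0\supseteq X_1\supseteq\cdots\supseteq X_\ell$ with anchor vertices $x_0,\ldots,x_{\ell-1}$ and concatenates anchor-to-anchor segments, whereas you package the identical construction as an induction on $\ell$ with a last-edge detour—and both arguments are sound.
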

\begin{proof}
We claim that for every integer $0\le j\le \ell$, there are a sequence of vertex subsets $V(G)=X_0\supseteq X_1 \supseteq \cdots \supseteq X_j$ and a sequence of vertices $x_0,x_1,\ldots,x_{j-1}$ such that 
\begin{itemize} 
    \item[(a)] for every $i\in \{0,1,\ldots,j\}$, $G[X_i]$ is connected,
    \item[(b)]  for every $i\in \{0,1,\ldots,j-1\}$, $x_i\in X_i$,
    \item[(c)] for every $i\in [j]$, $\arb_{(\Gamma, A)}(G[X_i],\gamma)\ge \arb_{(\Gamma, A)}(G[X_{i-1}],\gamma)/2$,
    \item[(d)] for every $i\in [j]$ and every vertex $u\in X_i$, there is an $(x_{i-1},u)$-path $P_{x_{i-1},u}$ of length at least 1 in $G$ such that $V(P_{x_{i-1},u})\cap X_i=\{u\}$ and $V(P_{x_{i-1},u})\subseteq X_{i-1}$.
\end{itemize}
 Note that $(X_0=V(G))$ is a sequence for $j=0$. 
 Let $V(G)=X_0\supseteq X_1 \supseteq \cdots \supseteq X_j$ be a sequence of vertex subsets and $x_0,x_1,\ldots,x_{j-1}$ be a sequence of vertices satisfying the conditions (a)--(d) as claimed with $j$ being maximal. 
We show that $j=\ell$. 

 Suppose that $j<\ell$. Note that $G[X_j]$ is a connected graph with \[\arb_{(\Gamma, A)}(G[X_j],\gamma)\ge \frac{\arb_{(\Gamma, A)}(G,\gamma)}{2^j}\ge 2^{\ell-j}.\] 
Let $x_j\in X_j$ be an arbitrary vertex. Let $(L_0, L_1, \ldots, L_p)$ be a leveling of $G[X_j]$ with starting vertex $x_j$. By Lemma~\ref{lem:lel}, there are $q\in [p]$ and $X_{j+1}\subseteq L_q\subseteq X_j$ such that \begin{itemize}
    \item $G[X_{j+1}]$ is a connected component of $L_q$, and 
    \item $\arb_{(\Gamma, A)}(G[X_{j+1}],\gamma)\ge\frac{\arb_{(\Gamma, A)}(G[X_j],\gamma)}{2}$.
\end{itemize}  
Because $(L_0, L_1, \ldots, L_p)$ is a leveling of $G[X_j]$, for every vertex $u\in X_{j+1}\subseteq L_q$, there is a path of length $q$ from $x_j$ to $u$, which contains exactly one vertex from each of $L_0, \ldots, L_q$. So this path is contained in $X_j$.
Thus, we obtain sequences $V(G)=X_0\supseteq X_1\supseteq\cdots\supseteq X_{j+1}$ and $x_0,\ldots,x_j$ satisfying (a)--(d). This contradicts the fact that $j$ is maximal. Therefore, $j=\ell$.

Let $X:=X_\ell$. From the construction of those sequences, we have that $G[X]$ is connected and $\arb_{(\Gamma, A)}(G[X],\gamma)\ge \frac{\arb_{(\Gamma, A)}(G,\gamma)}{2^\ell}$.
Thus, it is sufficient to show that for every pair of distinct vertices $x$ and $y$ in $X$, there is an $X$-path in $G$ of length at least $\ell$ whose endpoints are $x$ and $y$.

Since $x\in X\subseteq X_1$, because of the property $(d)$ with $i=1$, there is an $(x_0,x)$-path $P_{x_0,x}$ of length at least $1$ in $G$ such that $V(P_{x_0,x})\cap X_1=\{x\}$.

Since $y\in X=X_\ell$, because of the property $(d)$ with $i=\ell$,
there is an $(x_{\ell-1},y)$-path $P_{x_{\ell-1},y}$ of length at least $1$ in $G$ such that $V(P_{x_{\ell-1},y})\cap X_\ell=\{y\}$ and $V(P_{x_{\ell-1},y})\subseteq X_{\ell-1}$. 
For every $j\in [\ell-1]$, since $x_j\in X_j$, 
there is an $(x_{j-1},x_j)$-path $P_{x_{j-1},x_j}$ of length at least $1$ where $V(P_{x_{j-1},x_j})\cap X_j=\{x_j\}$ and $V(P_{x_{j-1},x_j})\subseteq X_{j-1}$.

Observe that all paths in $\{P_{x_j,x_{j+1}}:j\in \{0, 1, \ldots, \ell-2\}\}\cup \{P_{x_{\ell-1},y}\}$ are pairwise internally vertex-disjoint. On the other hand, the path $P_{x_0,x}$ is vertex-disjoint from each path in $\{P_{x_j,x_{j+1}}:j\in [\ell-2]\}\cup \{P_{x_{\ell-1},y}\}$.
Let $Q$ be the shortest path from $x_1$ to $x$ in $P_{x_0, x}\cup P_{x_0, x_1}$. Then 
$Q\cup P_{x_1,x_2}\cup\cdots\cup P_{x_{\ell-2},x_{\ell-1}}\cup P_{x_{\ell-1},y}$ is an $X$-path in $G$ whose endpoints are $x$ and $y$. It has length at least $\ell$ since each path in $\{P_{x_j,x_{j+1}}:j\in [\ell-2]\}\cup \{Q, P_{x_{\ell-1},y}\}$ has length at least $1$.
\end{proof}

We apply Lemma~\ref{lem:longpath} recursively to find a subset $S$, where we can obtain a subdivision of a fixed graph such that its branching vertices are on $S$ and its branching paths use different areas. 

\begin{corollary}\label{cor:longpath2}
Let $\ell$ and $m$ be positive integers. Let $\Gamma$ be an abelian group and let $A\subseteq \Gamma$ be a non-empty set. Let $\g$ be a connected $\Gamma$-labelled graph. If $\arb_{(\Gamma, A)}(G, \gamma)\ge 2^{\ell m}$, then there is a sequence $S_0\supseteq S_1\supseteq S_2\supseteq \cdots \supseteq S_m$ of sets of vertices in $G$ such that
\begin{itemize}
    \item for every $i\in [m]$ and every two distinct vertices $x$ and $y$ in $S_m$, there is an $S_m$-path of length at least $\ell$ in $G[S_m\cup (S_{i-1}\setminus S_i)]$ whose endpoints are $x$ and $y$, and 
    \item for every $i\in [m]$, $G[S_i]$ is connected and $\arb_{(\Gamma, A)}(G[S_i],\gamma)\ge \frac{\arb_{(\Gamma, A)}(G,\gamma)}{2^{i\ell }}$.
\end{itemize}
\end{corollary}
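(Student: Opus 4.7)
The plan is to construct the sequence $S_0 \supseteq S_1 \supseteq \cdots \supseteq S_m$ by iterating Lemma~\ref{lem:longpath}. Set $S_0 := V(G)$, so that $(G[S_0], \gamma) = (G, \gamma)$ is connected with $\arb_{(\Gamma, A)}(G[S_0], \gamma) \ge 2^{\ell m}$. Suppose inductively that for some $i \in [m]$ we have constructed $S_{i-1}$ with $G[S_{i-1}]$ connected and
\[
\arb_{(\Gamma, A)}(G[S_{i-1}], \gamma) \ge \frac{\arb_{(\Gamma, A)}(G, \gamma)}{2^{(i-1)\ell}} \ge 2^{\ell(m-i+1)} \ge 2^{\ell}.
\]
We then apply Lemma~\ref{lem:longpath} to the connected $\Gamma$-labelled graph $(G[S_{i-1}], \gamma)$ with parameter $\ell$, which yields a subset $S_i \subseteq S_{i-1}$ such that $G[S_i]$ is connected,
\[
\arb_{(\Gamma, A)}(G[S_i], \gamma) \ge \frac{\arb_{(\Gamma, A)}(G[S_{i-1}], \gamma)}{2^{\ell}} \ge \frac{\arb_{(\Gamma, A)}(G, \gamma)}{2^{i\ell}},
\]
and moreover any two distinct vertices $x, y \in S_i$ are joined by an $S_i$-path of length at least $\ell$ in $G[S_{i-1}]$. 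Iterating this for $i = 1, 2, \ldots, m$ produces the desired chain together with the arboricity bound.

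To verify the path condition, fix $i \in [m]$ and two distinct vertices $x, y \in S_m$. Since $S_m \subseteq S_i$, the pair $x, y$ lies in $S_i$, so the conclusion of Lemma~\ref{lem:longpath} at stage $i$ supplies an $S_i$-path $P$ of length at least $\ell$ from $x$ to $y$ contained in $G[S_{i-1}]$. By definition of an $S_i$-path, the internal vertices of $P$ avoid $S_i$; hence they lie in $S_{i-1} \setminus S_i$, and in particular avoid $S_m$ since $S_m \subseteq S_i$. Thus $P$ is an $S_m$-path whose vertex set is contained in $\{x, y\} \cup (S_{i-1} \setminus S_i) \subseteq S_m \cup (S_{i-1} \setminus S_i)$, as required.

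There is no real obstacle beyond bookkeeping: the essential observation is that at each step the newly produced paths have interiors lying precisely in the discarded layer $S_{i-1} \setminus S_i$, and this property is preserved under subsequent shrinking because all later sets $S_{i+1}, \ldots, S_m$ are contained in $S_i$ and therefore disjoint from $S_{i-1} \setminus S_i$. The only conditions to check when invoking Lemma~\ref{lem:longpath} at each step are connectedness, which is guaranteed inductively by the lemma itself, and the arboricity threshold $2^{\ell}$, which follows from the initial hypothesis $\arb_{(\Gamma, A)}(G, \gamma) \ge 2^{\ell m}$ and the halving of the arboricity by a factor of $2^{\ell}$ at each stage.
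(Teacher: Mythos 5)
Your proof is correct and follows essentially the same route as the paper: both iterate Lemma~\ref{lem:longpath} $m$ times (the paper phrases this as a maximal-sequence argument, you as a direct induction), with the same arboricity bookkeeping and the same observation that an $S_i$-path with interior in $S_{i-1}\setminus S_i$ remains an $S_m$-path because $S_m\subseteq S_i$ is disjoint from that layer. No issues.
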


\begin{proof}
We claim that for every integer $0\le t\le m$,  there is a sequence of vertex subsets $V(G)=S_0\supseteq S_1\supseteq S_2\supseteq \cdots \supseteq S_t$ such that 
\begin{itemize}
    \item[(i)] for every $i\in [t]$ and every two distinct vertices $x$ and $y$ in $S_t$, there is an $S_t$-path of length at least $\ell$ in $G[S_t\cup (S_{i-1}\setminus S_i)]$ whose endpoints are $x$ and $y$, and 
    \item[(ii)] for every $i\in [t]$, $G[S_i]$ is connected and $\arb_{(\Gamma, A)}(G[S_i],\gamma)\ge \frac{\arb_{(\Gamma, A)}(G,\gamma)}{2^{i\ell }}$.
\end{itemize}
Note that ($S_0=V(G)$) is a sequence for $t=0$. %
Let $V(G)=S_0\supseteq S_1 \supseteq \cdots \supseteq S_t$ be a sequence of vertex subsets satisfying the conditions (i) and (ii) as claimed with $t$ being maximal. 
We show that $t=m$. 
Suppose that $t<m$. Note that $G[S_t]$ is a connected graph with 
\[\arb_{(\Gamma, A)}(G[S_t],\gamma)\ge \frac{\arb_{(\Gamma, A)}(G,\gamma)}{2^{t\ell}}\ge 2^{\ell(m-t)}\ge 2^\ell.\]

We apply Lemma~\ref{lem:longpath} to $G[S_{t}]$ and obtain a set $S_{t+1}\subseteq S_{t}$ such that 
\begin{itemize}
\item $G[S_{t+1}]$ is connected,  
\item $\arb_{(\Gamma, A)}(G[S_{t+1}],\gamma)\ge \frac{\arb_{(\Gamma, A)}(G[S_t],\gamma)}{2^{\ell}}\ge \frac{\arb_{(\Gamma, A)}(G,\gamma)}{2^{(t+1)\ell}}$, and
    \item for every two distinct vertices $x$ and $y$ in $S_{t+1}$, there is an $S_{t+1}$-path $P$ of length at least $\ell$ in $G[S_t]$ whose endpoints are $x$ and $y$.
\end{itemize}

We have that the sequence $V(G)=S_0\supseteq S_1 \supseteq \cdots \supseteq S_t\supseteq S_{t+1}$ satisfies the conditions (i) and (ii) contradicting the fact that $t$ is maximal. Therefore, $t=m$.
\end{proof}

If $\Lambda$ is a subgroup of $\Gamma$ such that $A\cap \Lambda=\emptyset$, then based on Lemma~\ref{lem:longpath} and Corollary~\ref{cor:longpath2}, we can find a vertex set $S$ such that for any two vertex $x,y\in S$ there is a $S$-path with endpoints $x$ and $y$ and are of $
\gamma$-value in $\Gamma\setminus\Lambda$ by using a cycle of $\gamma$-value in $A$.  We formalize this in the following corollary.
\begin{corollary}\label{cor:longpath3}
Let $\ell$ and $m$ be positive integers. Let $\Gamma$ be an abelian group and let $A\subseteq \Gamma$ be a non-empty set. Let $\g$ be a connected $\Gamma$-labelled graph. Assume that there is a sequence $S_0\supseteq S_1\supseteq S_2\supseteq \cdots \supseteq S_m$ of sets of vertices in $G$ such that
\begin{itemize}
    \item[(i)] for every $i\in [m]$ and every two distinct vertices $x$ and $y$ in $S_m$, there is an $S_m$-path of length at least $\ell$ in $G[S_m\cup (S_{i-1}\setminus S_i)]$ whose endpoints are $x$ and $y$, and
    \item[(ii)] $(T_1, T_2)$ is a partition of $S_m$.
\end{itemize}
For each $j\in [m]$, let $L_j:=S_{j-1}\setminus S_j$. Then the following hold.
\begin{enumerate}
    \item For every distinct vertices $x$ and $y$ in $T_1$, every $I\subseteq [m]$ with $|I|=2$,  and every vertex $z$ in $T_2$, there is an $(x,y)$-path $P_{x,y}$ of length at least $2\ell$ such that
$V(P_{x,y})\subseteq \{x,y,z\}\cup  (\bigcup_{i\in I}L_i)$.
    \item Let $\Lambda$ be a non-empty subgroup of $\Gamma$ such that $\Lambda\cap A=\emptyset$. For every distinct vertices $x$ and $y$ in $T_1$, every $I\subseteq [m]$ with $|I|=4$,  and every cycle $C$ contained in $G[T_2]$ of $\gamma$-value in $A$, there is an $(x,y)$-path $P_{x,y}$ of $\gamma$-value  in $\Gamma\setminus \Lambda$ and of length at least $2\ell$ such that
$V(P_{x,y})\subseteq \{x,y\}\cup V(C)\cup  (\bigcup_{i\in I}L_i)$.
\end{enumerate}
\end{corollary}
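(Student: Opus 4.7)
The plan is to exploit the main hypothesis -- that every pair of distinct vertices in $S_m$ can be joined by an $S_m$-path of length at least $\ell$ with internal vertices in any chosen level $L_i$ -- by stitching such paths into $(x,y)$-paths. The clean feature of this stitching is that the levels $L_1,\ldots,L_m$ are pairwise disjoint and each is disjoint from $S_m$ (since $L_i\subseteq S_{i-1}\setminus S_i$ and $S_m\subseteq S_i$ for every $i\in[m]$), so two $S_m$-paths using different levels meet only at common $S_m$-endpoints. Throughout I will write $P^{(i)}_{u,v}$ for the $S_m$-path from $u$ to $v$ through level $L_i$ granted by the hypothesis.

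Part (1) is then immediate: given $I=\{i_1,i_2\}$ and $z\in T_2$, the union $P^{(i_1)}_{x,z}\cup P^{(i_2)}_{z,y}$ is a single $(x,y)$-path of length at least $2\ell$ whose vertex set lies in $\{x,y,z\}\cup L_{i_1}\cup L_{i_2}$, since the two pieces share only the vertex $z$.

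For part (2), I would fix two distinct vertices $z_1,z_2\in V(C)$ and denote by $Q,Q'$ the two arcs of $C$ from $z_1$ to $z_2$, so that $\gamma(Q)+\gamma(Q')=\gamma(C)\in A\subseteq\Gamma\setminus\Lambda$. After labelling the elements of $I$ as $i_1,i_2,i_3,i_4$, I would build five candidate $(x,y)$-paths: $P_a=P^{(i_1)}_{x,z_1}\cup P^{(i_2)}_{z_1,y}$, $P_b=P^{(i_3)}_{x,z_2}\cup P^{(i_4)}_{z_2,y}$, $P_Q=P^{(i_1)}_{x,z_1}\cup Q\cup P^{(i_4)}_{z_2,y}$, $P_{Q'}=P^{(i_1)}_{x,z_1}\cup Q'\cup P^{(i_4)}_{z_2,y}$, and $P^{*}=P^{(i_3)}_{x,z_2}\cup Q\cup P^{(i_2)}_{z_1,y}$. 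Because each level is used in at most one piece per path and $V(C)\subseteq S_m$ is disjoint from every $L_i$, each candidate is a genuine path of length at least $2\ell$ with vertex set in $\{x,y\}\cup V(C)\cup\bigcup_{i\in I}L_i$.

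The heart of the proof -- and the main obstacle -- is to show that at least one of these five paths has $\gamma$-value in $\Gamma\setminus\Lambda$. Writing $a,b,c,d$ for the $\gamma$-values of $P^{(i_1)}_{x,z_1},P^{(i_2)}_{z_1,y},P^{(i_3)}_{x,z_2},P^{(i_4)}_{z_2,y}$, the five path-values are $a+b$, $c+d$, $a+\gamma(Q)+d$, $a+\gamma(Q')+d$, and $c+\gamma(Q)+b$. If all five lay in $\Lambda$, then subtracting $P_{Q'}$ from $P_Q$ forces $\gamma(Q)\equiv\gamma(Q')\pmod\Lambda$, hence $2\gamma(Q)\equiv\gamma(C)\pmod\Lambda$; at the same time, the identity $(a+\gamma(Q)+d)+(c+\gamma(Q)+b)-(a+b)-(c+d)=2\gamma(Q)$ forces $2\gamma(Q)\in\Lambda$. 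Combining these gives $\gamma(C)\in\Lambda$, contradicting $\gamma(C)\in A$ and $\Lambda\cap A=\emptyset$. The obstacle is precisely this modular bookkeeping: the five paths must be chosen so that some linear combination of their $\gamma$-values cancels all uncontrolled terms $a,b,c,d$ and isolates $\gamma(C)$ modulo $\Lambda$, which is exactly what forces the use of all four levels in $I$ and both arcs of $C$.
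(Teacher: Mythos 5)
Your proof is correct and follows essentially the same route as the paper: part (1) by concatenating two level-disjoint $S_m$-paths at $z$, and part (2) by contradiction, stitching $x$ and $y$ to two vertices of $C$ through the four levels and using linear combinations of path values in $\Lambda$ to force $\gamma(C)\in\Lambda$, contradicting $\Lambda\cap A=\emptyset$. The only cosmetic difference is that the paper uses four candidate paths (each arc of $C$ appearing once, giving $\gamma(C)=q_1+q_2\in\Lambda$ in a single combination), whereas you use five candidates and a two-step cancellation; both are valid.
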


\begin{proof}
(1)  Let $I=\{i_1, i_2\}$. By Property (i), there are 
\begin{itemize}
    \item an $(x,z)$-path $P_{x,z}$ of length at least $\ell$ with $V(P_{x,z})\subseteq \{x,z\}\cup L_{i_1}$, and
    \item an $(y,z)$-path $P_{y,z}$ of length at least $\ell$ with $V(P_{y,z})\subseteq \{y,z\}\cup L_{i_2}$.
\end{itemize}
Then $P_{x,y}:=P_{x,z}\cup P_{y,z}$ has length at least $2\ell$ and $V(P_{x,y})\subseteq \{x,y,z\}\cup  (\bigcup_{i\in I}L_i)$.
\medskip 

(2) Suppose for contradiction that the claim fails for some pair of vertices of $T_1$, say $x$ and $y$, and some $I\subseteq [m]$ with $|I|=4$ and some cycle $C$ contained in $G[T_2]$ of $\gamma$-value in $A$. 

Let $I=\{i_1,i_2,i_3,i_4\}$. Let $u$ and $v$ be two distinct vertices of $C$. Denote by $Q_1$ and $Q_2$ the two internally vertex-disjoint paths from $u$ to $v$ in $C$.

Since $x,y,u,v\in S_m$, by Property (i), there are 
\begin{itemize}
    \item an $(x,u)$-path $P_{x,u}$ of length at least $\ell$ with $V(P_{x,u})\subseteq \{x,u\}\cup L_{i_1}$, 
    \item an $(x,v)$-path $P_{x,v}$ of length at least $\ell$ with $V(P_{x,v})\subseteq \{x,v\}\cup L_{i_2}$,
    \item a $(y,u)$-path $P_{y,u}$ of length at least $\ell$ with $V(P_{y,u})\subseteq \{y,u\}\cup L_{i_3}$, and
    \item a $(y,v)$-path $P_{y,v}$ of length at least $\ell$ with $V(P_{y,v})\subseteq \{y,v\}\cup L_{i_4}$. 
\end{itemize}  
Let $q_1:=\gamma(Q_1)$, $q_2:=\gamma(Q_2)$, 
$a_1:=\gamma(P_{x,u})$, $a_2:=\gamma(P_{x,v})$, $a_3:=\gamma(P_{y,u})$, and $a_4:=\gamma(P_{y,v})$. 

Since the claim fails at $x$ and $y$,  all the $(x,y)$-paths of length at least $2\ell$ contained in $\{x,y\}\cup V(C)\cup  (\bigcup_{i\in I}L_i)$ should be of $\gamma$-value in $\Lambda$. Thus, we have the following:
$$\begin{aligned}
&a_1+a_3\in \Lambda, \\
&a_2+a_4\in\Lambda, \\
&a_1+q_1+a_4\in\Lambda, \\
&a_2+q_2+a_3\in\Lambda.
\end{aligned}
$$
Hence, $\gamma(C)=q_1+q_2\in \Lambda\subseteq \Gamma\setminus A$, which contradicts the assumption that $\gamma(C)\in A$. Therefore, there is an $(x,y)$-path $P_{x,y}$ of $\gamma$-value in $\Gamma\setminus \Lambda$ and of length at least $2\ell$ such that  $V(P_{x,y})\subseteq \{x,y\}\cup V(C)\cup  (\bigcup_{i\in I}L_i)$.
\end{proof}

\section{Finding a long cycle of $\gamma$-value in $A$}\label{sec:longcycle}

In this section, we prove Theorem~\ref{thm:longcyclepos}.

\begin{THMMAIN}
For every positive integer $\omega$, there is a function $g_{\omega}:\mathbb{N}\to \mathbb{R}$ satisfying the following. 
Let $\Gamma$ be an abelian group, let $A\subseteq \Gamma$ be a non-empty set with $\abs{\Gamma\setminus A}\le \omega$, and let $(G, \gamma)$ be a $\Gamma$-labelled graph. If $\arb_{(\Gamma, A)}(G, \gamma)\ge g_\omega(d)$, then $G$ contains an $(A,d)$-cycle.
\end{THMMAIN}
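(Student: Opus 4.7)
The plan is to proceed by induction on $\omega$. The base case $\omega = 0$ gives $A = \Gamma$, so every cycle is an $A$-cycle and $\arb_{(\Gamma, A)}(G, \gamma)$ coincides with the classical vertex-arboricity. The standard chain---large vertex-arboricity implies large chromatic number, which implies a subgraph of minimum degree at least $d$, which contains a cycle of length at least $d+1$ via a longest-path argument---gives $g_0(d) = d+1$.

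For the inductive step, I apply Corollary~\ref{cor:longpath2} with $\ell = \lceil d/2 \rceil$ and $m$ very large (depending on $\omega$ and Ramsey numbers) to obtain a nested sequence $S_0 \supseteq \cdots \supseteq S_m$ producing, for each pair $x, y \in S_m$ and each $i \in [m]$, an internally-disjoint $(x,y)$-path $P_{xy, i}$ of length at least $\ell$ through $L_i := S_{i-1} \setminus S_i$ with value $a_{xy, i}$. Each cycle $P_{xy, i} \cup P_{xy, j}$ has length at least $2\ell \ge d$; if any has value in $A$ we are done. Otherwise Ramsey's theorem applied to $\binom{[m]}{2}$ using the at-most-$\omega$ colors $a_{xy, i} + a_{xy, j} \in \Gamma \setminus A$, iterated simultaneously across pairs with $m$ large enough, produces a large subset $T \subseteq S_m$ and a large index set $I \subseteq [m]$ on which $a_{xy, i} = a_{xy}$ depends only on the pair, with $2 a_{xy} \in \Gamma \setminus A$. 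The problem then reduces to finding an $A$-valued cycle in the auxiliary $\Gamma$-labelled complete graph $(K_T, \alpha)$ with $\alpha(\{x,y\}) := a_{xy}$, since a cycle there of combinatorial length $L \le |I|$ lifts (via concatenation through distinct indices in $I$) to a cycle in $G$ of length at least $L\ell \ge d$ with the same $\gamma$-value.

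Applying Ramsey to the triangles of $K_T$, either some triangle is $A$-valued (done) or all triangles on a large $U \subseteq T$ share a common value $c \in \Gamma \setminus A$; solving the equations $\alpha(xy) + \alpha(yz) + \alpha(xz) = c$ forces $\alpha(xy) = c - g(x) - g(y)$ with $g$ constant modulo the 2-torsion subgroup $\Gamma[2]$, so every cycle of length $L$ in $K_U$ has value $L\beta$ for a fixed $\beta \in \Gamma$, and the assumption $\{L\beta : L \ge 3\} \cap A = \emptyset$ forces the cyclic subgroup $\Lambda := \langle \beta \rangle$ to be contained in $\Gamma \setminus A$ and of order at most $\omega$. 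When $|\Lambda| \ge 2$, I pass to the quotient $(\Gamma/\Lambda, \pi(A))$ with the induced labelling (noting $|(\Gamma/\Lambda) \setminus \pi(A)| \le \omega/|\Lambda| < \omega$), and the inductive hypothesis produces a long cycle of $\pi$-value in $\pi(A)$, which is then lifted to a cycle of $\gamma$-value exactly in $A$ using Corollary~\ref{cor:longpath3}(2) with $\Lambda$ (which satisfies $\Lambda \cap A = \emptyset$) to adjust the $\gamma$-value within its $\Lambda$-coset. The degenerate case $\beta = 0$ (so $\Lambda = \{0\}$ and $0 \notin A$) is handled by applying Corollary~\ref{cor:longpath3}(2) with $\Lambda = \{0\}$ to obtain long paths of nonzero $\gamma$-value, and then combining two such paths (made internally disjoint using different $A$-cycles supplied by Lemma~\ref{lem:disjointcycles1}) into a long cycle whose $\gamma$-value lies in $A$; a further Ramsey step on the possible sums of path values in $\Gamma \setminus \{0\}$ (of which at most $\omega$ lie in $\Gamma \setminus A$) rules out the possibility that all such sums are bad.

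The main obstacle I anticipate is the lifting step: transferring a $\pi(A)$-valued cycle in the quotient (which only guarantees $\gamma$-value in $A + \Lambda$) to a cycle of $\gamma$-value exactly in $A$ while preserving the length bound. The precision required here, combined with the simultaneous Ramsey bookkeeping needed to keep the parameters compatible across the induction, is where the bulk of the technical work resides.
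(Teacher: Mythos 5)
Your reduction to an auxiliary labelled complete graph and the triangle analysis (all triangles sharing a value $c$ forces $\alpha(xy)=c-g(x)-g(y)$ with $2g$ constant, hence every $L$-cycle has value $L\beta$) is sound, but both terminal cases of your argument have genuine gaps, and they sit exactly where the real difficulty of the theorem lies. In the case $\abs{\Lambda}\ge 2$ you apply the inductive hypothesis to the quotient labelling and obtain a long cycle whose $\gamma$-value lies in $A+\Lambda$, then claim to ``adjust the value within its $\Lambda$-coset'' via Corollary~\ref{cor:longpath3}(2). That corollary does no such thing: it only produces, inside a fixed leveling structure and using a cycle of $\gamma$-value in $A$ lying in $G[T_2]$, an $(x,y)$-path whose value \emph{avoids} $\Lambda$; it gives no control over which coset of $\Lambda$ (let alone which element of $\Gamma$) the value lands in, and the cycle handed to you by the inductive hypothesis is an arbitrary cycle of $G$ with no relation to the sets $S_i$, $T_1$, $T_2$, so there is no way to splice such paths into it. Converting ``value in $A+\Lambda$'' into ``value in $A$'' is essentially the original problem --- this is precisely the subtlety the paper flags when it remarks that Theorem~\ref{thm:findingkt} does not imply Theorem~\ref{thm:longcyclepos} --- so the induction does not close. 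In the degenerate case $\beta=0$, the assertion that ``a further Ramsey step rules out the possibility that all such sums are bad'' is false: if every path produced has the same value $v\neq 0$ and $2v\in\Gamma\setminus A$ (e.g.\ $\Gamma=\mathbb{Z}$, $A=\mathbb{Z}\setminus\{0,2\}$, $v=1$), then all pairwise sums lie in $\Gamma\setminus A$ and no amount of Ramsey or pigeonhole yields a contradiction; a contradiction must use the $A$-valued cycles themselves, not merely the nonzero-valued paths.

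The missing idea is the swap argument that constitutes the paper's entire proof (which is direct, with no induction on $\omega$): take $\omega+2$ vertex-disjoint cycles $C_1,\dots,C_{\omega+2}$ of $\gamma$-value in $A$ inside $G[S_m]$ (Lemma~\ref{lem:disjointcycles1}), split each $C_i$ into two arcs $P_i,P_i'$ at chosen vertices $a_i,b_i$, string consecutive cycles together by long connector paths $Q_j,R_j$ through pairwise distinct levels (property (i) of Corollary~\ref{cor:longpath2}), and consider the $\omega+1$ nested long cycles $H_i=P_1\cup P_{i+1}\cup\bigcup_{1\le j\le i}(Q_j\cup R_j)$. If none has value in $A$, pigeonhole on $\abs{\Gamma\setminus A}\le\omega$ gives $a<b$ with $\gamma(H_a)=\gamma(H_b)$; cancelling the common part yields $\gamma(P_{a+1})=\gamma\bigl(P_{b+1}\cup\bigcup_{a+1\le j\le b}(Q_j\cup R_j)\bigr)$, and replacing $P_{a+1}$ by $P_{a+1}'$ in that configuration produces a long cycle of value $\gamma(P_{a+1}')+\gamma(P_{a+1})=\gamma(C_{a+1})\in A$. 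This step transfers the value of a possibly short $A$-cycle onto a long cycle \emph{exactly}, which is what both of your cases lack; it should replace both your coset-adjustment step and your ``further Ramsey step.''
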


\begin{proof}
Set $g_\omega(t):=(\omega+2)2^{t(\omega+1)+1}$. By Corollary~\ref{cor:longpath2} with $\ell=\lfloor\frac{t}{2}\rfloor$ and $m=2(\omega+1)$, there is a sequence $S_0\supseteq S_1\supseteq \cdots \supseteq S_m$ of sets of vertices in $G$ such that
\begin{itemize}
    \item[(i)] for every $i\in [m]$ and every two vertices $x$ and $y$ in $S_m$, there is an $S_m$-path of length at least $\ell$ in $G[S_m\cup (S_{i-1}\setminus S_i)]$ whose endpoints are $x$ and $y$, and
    \item[(ii)] for every $i\in [m]$, $G[S_i]$ is connected and $\arb_{(\Gamma, A)}(G[S_i],\gamma])\ge \frac{\arb_{(\Gamma, A)}(G,\gamma)}{2^{i\ell }}$.
\end{itemize}

Observe that
\[\arb_{(\Gamma, A)}(G[S_m],\gamma)\ge \frac{\arb_{(\Gamma, A)}(G,\gamma)}{2^{m\ell }}\ge  \frac{\arb_{(\Gamma, A)}(G,\gamma)}{2^{t(\omega+1) }}\ge2(\omega+2).\] 
By Lemma~\ref{lem:disjointcycles1}, we have that $G[S_m]$ contains $\omega+2$ vertex-disjoint cycles $C_1, C_2, \ldots, C_{\omega+2}$ whose $\gamma$-values are in $A$.
For each $i\in [\omega+2]$, let $a_i$ and $b_i$ be two distinct vertices of $C_i$. 
Denote by $P_i$ and $P'_i$ the two internally vertex-disjoint paths from $a_i$ to $b_i$ contained in $C_i$. For each $j\in [m]$, let $L_j:=S_{j-1}\setminus S_j$. Then $L_i\cap L_j=\emptyset$ for distinct $i,j\in [m]$. 

For every $i\in [\omega+1]$, 
\begin{itemize}
    \item let ${Q}_{i}$ be an $S_m$-path of length at least $\ell$ in $S_m\cup L_i$ whose endpoints are $a_i$ and $a_{i+1}$, and 
    \item let ${R}_{i}$ be an $S_m$-path of length at least $\ell$ in $S_m\cup L_{i+\omega+1}$ whose endpoints are $b_i$ and $b_{i+1}$.
\end{itemize}
Such paths exist by the property (i). Note that the sets in $\{L_1, \ldots, L_m\}$ are pairwise disjoint. Thus, the paths in $\{Q_i:i\in [\omega+1]\}\cup \{R_i:i\in [\omega+1]\}$ are pairwise internally vertex-disjoint.

\begin{figure}[t]
    \centering
    \includegraphics[width=0.5\textwidth]{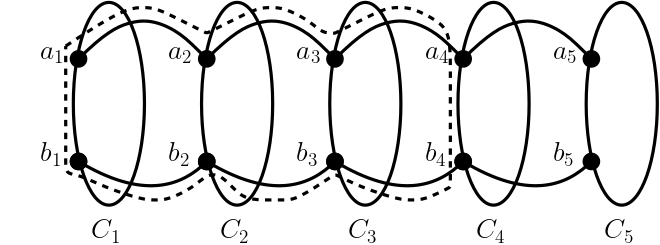}
    \caption{The construction of $C_i, Q_i, R_i, H_i$'s in the proof of Theorem~\ref{thm:longcyclepos}. The dashed cycle represents the cycle $H_3$.}
    \label{fig:cycles}
\end{figure}

For every $i\in [\omega+1]$, let 
\[H_i=P_1\cup P_{i+1}\cup \bigcup_{1\le j\le i}(Q_{j}\cup R_{j}).\]  
See Figure~\ref{fig:cycles} for an illustration.
Note that each $H_i$ has length at least $t$, because it contains at least two paths in $\{Q_i:i\in [\omega+1]\}\cup \{R_i:i\in [\omega+1]\}$. 
If there is some $i\in [\omega+1]$ such that $\gamma(H_i)\in A$, then we are done. Therefore, we may assume that $\gamma(H_i)\in \Gamma\setminus A$ for each $i\in [\omega+1]$.

Since $|\Gamma\setminus A|\le \omega$, by the pigeonhole principle, there are distinct integers $a, b\in [\omega+1]$ such that $\gamma(H_a)=\gamma(H_b)\in \Gamma\setminus A$. Assume $a<b$ without loss of generality. 
Then, we have that 
\[\gamma(P_{a+1})=\gamma\left( P_{b+1}\cup \bigcup_{a+1\le j\le b}(Q_{j}\cup R_{j})\right).\] Let $C=P'_{a+1}\cup  P_{b+1}\cup \bigcup_{a+1\le j\le b}(Q_{j}\cup R_{j})$. Since $P_{a+1}\cup P'_{a+1}=C_{a+1}$, we have that 
\[\gamma(C)=\gamma(P_{a+1}')+\gamma\left( P_{b+1}\cup \bigcup_{a+1\le j\le b}(Q_{j}\cup R_{j})\right)=\gamma(P_{a+1}')+\gamma(P_{a+1})=\gamma (C_{a+1})\in A.\] Furthermore, since $b>a$, the length of $C$ is at least $t$.  Therefore, $C$ is the desired cycle.
\end{proof}

We can apply Theorem~\ref{thm:longcyclepos} when $\Gamma\setminus A=\{0\}$. This implies that a similar function as that in Theorem~\ref{thm:longcyclepos} exists when $\Gamma\setminus A$ is a subgroup of $\Gamma$. We can reduce by taking a quotient group by $\Gamma\setminus A$.

\begin{corollary}\label{cor:cycle1}
There is a function $g:\mathbb{N}\to \mathbb{N}$ satisfying the following.
Let $\Gamma$ be an abelian group and let $A\subseteq \Gamma$ be a non-empty set such that $\Gamma\setminus A$ is a non-trivial subgroup of $\Gamma$. Let $\g$ be a $\Gamma$-labelled graph.  If $\arb_{(\Gamma,A)}(G,\gamma)\ge g(t)$, then $G$ contains a cycle of length at least $t$ and $\gamma$-value in $A$.
\end{corollary}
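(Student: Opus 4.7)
The plan is to reduce Corollary~\ref{cor:cycle1} to Theorem~\ref{thm:longcyclepos} by passing to a quotient group. Set $\Lambda := \Gamma \setminus A$; by hypothesis $\Lambda$ is a non-trivial subgroup of $\Gamma$, so the quotient $\Gamma/\Lambda$ and the induced $(\Gamma/\Lambda)$-labelling $\lambda$ of $(G,\gamma)$, defined by $\lambda(e) := \gamma(e) + \Lambda$ for each $e \in E(G)$, are well defined. Let $A' := (\Gamma/\Lambda) \setminus \{0\}$, where $0$ denotes the identity coset $\Lambda$ of $\Gamma/\Lambda$.

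The key observation I would verify is that for every subgraph $H$ of $G$, $\lambda(H) = \gamma(H) + \Lambda$, so $\lambda(H) \in A'$ (that is, $\lambda(H)$ is nonzero in $\Gamma/\Lambda$) if and only if $\gamma(H) \notin \Lambda$, i.e.\ $\gamma(H) \in A$. Applied to cycles, this gives a bijection between cycles of $\gamma$-value in $A$ and cycles of $\lambda$-value in $A'$. In particular, the same vertex partitions certify the arboricity for both labellings, so
\[
\arb_{(\Gamma, A)}(G, \gamma) \;=\; \arb_{(\Gamma/\Lambda,\, A')}(G, \lambda),
\]
and finding a cycle of length at least $t$ and $\gamma$-value in $A$ in $(G,\gamma)$ is equivalent to finding an $(A', t)$-cycle in $(G, \lambda)$.

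Now $|(\Gamma/\Lambda) \setminus A'| = 1$, so I can apply Theorem~\ref{thm:longcyclepos} with $\omega = 1$ to the $(\Gamma/\Lambda)$-labelled graph $(G, \lambda)$. Letting $g_1$ be the function promised by that theorem for $\omega = 1$, set $g(t) := \lceil g_1(t) \rceil$. Then the assumption $\arb_{(\Gamma, A)}(G, \gamma) \ge g(t)$ translates into $\arb_{(\Gamma/\Lambda, A')}(G, \lambda) \ge g_1(t)$, and Theorem~\ref{thm:longcyclepos} produces an $(A', t)$-cycle in $(G, \lambda)$, which by the bijection above is also an $(A, t)$-cycle in $(G, \gamma)$. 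I do not foresee any genuine obstacle: the only step requiring real content is the equivalence of arboricities under the quotient, which is immediate from $\Lambda$ being a subgroup (so that $\gamma \mapsto \gamma + \Lambda$ is a group homomorphism and $\Lambda$ is exactly the preimage of $0$).
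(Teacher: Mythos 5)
Your proposal is correct and follows essentially the same route as the paper: pass to the quotient $\Gamma/(\Gamma\setminus A)$, observe that the $(\Gamma,A)$-vertex-arboricity equals the arboricity with respect to the induced quotient labelling and the complement of the zero coset, and apply Theorem~\ref{thm:longcyclepos} with $\omega=1$. No issues.
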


\begin{proof}
Let $g_w$ be the function defined in the proof of Theorem~\ref{thm:longcyclepos}.  Set $g(t)=g_1(t)$ for all $t\in \mathbb{N}$. 

Let $\Omega=\Gamma\setminus A$, and 
let $\gamma^*$ be the induced $(\Gamma/\Omega)$-labelling of $G$.
Observe that \[\arb_{(\Gamma, A)}(G, \gamma)=\arb_{(\Gamma/\Omega, (\Gamma/\Omega)\setminus \{0+\Omega\})}(G, \gamma^*),\]
because for every induced subgraph $H$ of $G$, $(H, \gamma)$ has a cycle of $\gamma$-value in $A$ if and only if $(H, \gamma^*)$ has a cycle of non-zero $\gamma^*$-value.

Suppose $\arb_{(\Gamma,A)}(G,\gamma)\ge g(t)$. Then $\arb_{(\Gamma/\Omega, (\Gamma/\Omega)\setminus \{0+\Omega\})}(G, \gamma^*)\ge g(t)=g_1(t)$. So, by Theorem~\ref{thm:longcyclepos}, $G$ contains a cycle of length at least $t$ which has non-zero $\gamma^*$-value. Then this cycle has $\gamma$-value in $A$.    
\end{proof}

We prove Theorem~\ref{thm:lower}.

\begin{THMMAIN3}
Let $d$ and $t$ be positive integers. Let $\Gamma$ be an abelian group and let $A\subseteq \Gamma$ such that there is an element $x\in \Gamma$ where $d$ is the unique integer greater than $2$ for which $dx\in A$. Then there is a $\Gamma$-labelled graph $(G, \gamma)$ such that 
\begin{itemize}
    \item $\arb_{(\Gamma, A)}(G, \gamma)\ge t$ and, 
    \item $G$ has no cycle $C$ of length at least $d+1$ with $\gamma(C)\in A$.
\end{itemize}
\end{THMMAIN3}
\begin{proof}
We consider a complete graph $K_n$ where $n=(t-1)(d-1)+1$. Let $\gamma:E(K_n)\to \Gamma$ such that $\gamma(e)=x$ for all $e\in E(K_n)$. Observe that every cycle of $G$ with $\gamma$-value in $A$ has length exactly $d$ by the definition. Thus, $G$ has no cycle with $\gamma$-value in $A$ and of length at least $d+1$. 

Now, we verify that $\arb_{(\Gamma, A)}(G, \gamma)\ge t$.
Suppose that 
$\arb_{(\Gamma, A)}(G, \gamma)< t$. Then for any vertex partition of $G$ into at most $t-1$ parts, there is a part having at least $d$ vertices. By the construction, the part contains a cycle of length $d$, whose $\gamma$-value is in $A$, a contradiction. We conclude that $\arb_{(\Gamma, A)}(G, \gamma)\ge t$.
\end{proof}

For instance, if $\Gamma=\mathbb{Z}$ and $A=\{100\}$, then we can take $x=1$ and $d=100$. By Theorem~\ref{thm:lower}, one can construct a $\Gamma$-labelled graph of arbitrary large $\arb_{(\Gamma, A)}(G, \gamma)$ which has no cycle with $\gamma$-value in $A$ and of length at least $d+1$.
The theorem also says that if $\Gamma=\mathbb{Z}$ and $A=\{x\in \mathbb{Z}:x<0\}\cup \{100\}$, then one can construct a $\Gamma$-labelled graph of of arbitrary large $\arb_{(\Gamma, A)}(G, \gamma)$ which has no cycle with $\gamma$-value in $A$ and of length at least $d+1$, because there are no roles of negative values.

\section{Finding an $(A, d)$-subdivision of $K_t$}\label{sec:adsubdivision}

In this section, we prove Theorem~\ref{thm:findingkt}. 
Let $K_r^t$ be the multigraph obtained from $K_r$ by for each edge $e$ of $K_r$, adding $t-1$ multiple edges of $e$. Note that multigraphs will not appear in the proof, but in the induction step, we will recursively find some construction which is a subdivision of $K_r^t$, where each branching path has length at least $d$.

We first prove the following lemma dealing with a special case.

\begin{lemma}\label{lem:spanA}
    Let $d, t, q, \omega$ be positive integers with $q=t+{t\choose 2}\omega$. Let $\Gamma$ be an abelian group, let $A\subseteq \Gamma$ be a non-empty set with $\abs{\Gamma\setminus A}\le \omega$, and let $g\in \Gamma\setminus A$ where $\langle g \rangle\cap A\neq \emptyset$.
    Let $(H, \gamma)$ be a $\Gamma$-labelled graph that is a subdivision of $K_{q}$ whose branching paths are of $\gamma$-value $g$ and of length at least $d$. Then $H$ contains an $(A,d)$-subdivision of $K_t$.
\end{lemma}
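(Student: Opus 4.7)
The plan is to build each branching path of the desired $K_t$-subdivision by concatenating several branching paths of the given $K_q$-subdivision. Concatenating $m$ consecutive branching paths yields a path of $\gamma$-value $mg$ and length at least $md\ge d$, so the task reduces to (i) finding an $m$ with $mg\in A$, and (ii) ensuring there are enough ``spare'' branching vertices of the $K_q$-subdivision to serve as intermediate branching vertices in an internally vertex-disjoint way. The main obstacle is (i): one needs not merely some $mg\in A$, but a sufficiently small $m$, namely $m\le \omega+1$, so that the available ${t\choose 2}\omega$ spare vertices will suffice.

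First I would pin down such an $m$. Since $g\notin A$ but $\langle g\rangle\cap A\neq \emptyset$, let $m$ be the smallest positive integer with $mg\in A$; then $m\ge 2$. To see $m\le \omega+1$, consider the $\omega+1$ multiples $g,2g,\ldots,(\omega+1)g$. If they are pairwise distinct, then $\abs{\Gamma\setminus A}\le \omega$ forces one of them to lie in $A$, giving $m\le \omega+1$. Otherwise $g$ has some finite order $n\le \omega$, so $\langle g\rangle$ has $n$ elements; since $\langle g\rangle\cap A\neq \emptyset$, the cyclic structure of multiples of $g$ yields some positive $m'\le n$ with $m'g\in A$, hence $m\le n\le \omega$.

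Next I would execute the construction. Let $V$ be the set of branching vertices of the given $K_q$-subdivision in $H$, and for distinct $u,v\in V$ let $P_{uv}$ denote the corresponding branching path (so $\gamma(P_{uv})=g$ and $P_{uv}$ has length at least $d$). Fix any $T\subseteq V$ with $\abs{T}=t$ to serve as the branching vertices of the target $K_t$-subdivision, and set $W:=V\setminus T$, so $\abs{W}={t\choose 2}\omega$. Since $m-1\le \omega$, I can allocate pairwise disjoint sets $W_{u,v}\subseteq W$ with $\abs{W_{u,v}}=m-1$, one for each pair $\{u,v\}\in {T\choose 2}$. For each such pair, with any ordering $w_1,\ldots,w_{m-1}$ of $W_{u,v}$, define $Q_{u,v}:=P_{uw_1}\cup P_{w_1w_2}\cup \cdots \cup P_{w_{m-1}v}$.

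Finally I would verify that the $Q_{u,v}$'s form the branching paths of an $(A,d)$-subdivision of $K_t$ with branching vertex set $T$. Each $Q_{u,v}$ is a $(u,v)$-path with $\gamma$-value $mg\in A$ and length at least $md\ge d$. For internal disjointness, any two distinct pairs $\{u,v\}$ and $\{u',v'\}$ invoke disjoint collections of $K_q$-branching paths $P_{**}$: the endpoint sets of those paths differ because the $W_{u,v}$'s are pairwise disjoint and disjoint from $T$. Hence, by the internal vertex-disjointness of the branching paths of the original $K_q$-subdivision, the $Q_{u,v}$'s are internally vertex-disjoint, completing the construction.
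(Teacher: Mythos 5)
Your proposal is correct and takes essentially the same approach as the paper: both arguments first establish that some multiple $pg$ with $p\le\omega+1$ lies in $A$ (splitting into cases according to whether $g$ has order at most $\omega$), and then build each of the ${t\choose 2}$ new branching paths by concatenating $p$ branching paths of the $K_q$-subdivision routed through $p-1\le\omega$ private spare branching vertices, so that disjointness of these spare sets gives internal vertex-disjointness. The only difference is notational: you allocate abstract disjoint sets $W_{u,v}$ where the paper uses an explicit indexing of the spare vertices via a bijection $\phi$.
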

\begin{proof}
    We claim that there is a positive integer $p$ with $p\le \omega+1$ such that $pg\in A$. If $g$ has order at most $\omega$, then $\langle g\rangle=\{0, g, \ldots, (\ord_\Gamma(g)-1)g\}$, and by the assumption that $\langle g\rangle\cap A\neq \emptyset$, such an integer $p$ exists. Otherwise, since $|\Gamma\setminus A|\le \omega$, one of the elements in $\{g, 2g, \ldots, (\omega+1)g\}$ is not contained $\Gamma\setminus A$. So, the claim holds.

Let $w_{1},\ldots,w_{q}$ be the branching vertices of $H$, and for each $\{a,b\}\in {[q]\choose 2}$, let $Q_{\{a,b\}}$ be the branching path of $H$ whose endpoints are $w_a$ and $w_b$.
Let $\phi :{[t]\choose 2}\rightarrow \left[{t\choose 2}\right]$ be a bijection. 
For each $\{a,b\}\in {[t]\choose 2}$ with $a<b$, 
let 
\[
P_{\{a,b\}}:=Q_{\{a,\zeta+1\}}\cup \left(\bigcup_{i\in [p-2]}Q_{\{\zeta+i,~\zeta+i+1\}}\right) \cup Q_{\{\zeta+p-1,~b\}}
, \]
where $\zeta=t+(\phi(\{a,b\})-1)(p-1)$.
Since $q=t+{t\choose 2}\omega$, $P_U$ is well-defined for all $U\in {[t]\choose 2}$.

For $U\in {[t]\choose 2}$,
 $P_U$ is the union of $p$ branching paths of $H$, and therefore, $\gamma(P_U)=pg\in A$ and it has length at least $d$. Moreover, by the construction, for distinct $U_1$ and $U_2$ in ${[t]\choose 2}$, $P_{U_1}$ and $P_{U_2}$ are internally vertex-disjoint.
Hence, $\bigcup_{U\in{[t]\choose 2}}P_U$ is an $(A, d)$-subdivision of  $K_t$-subdivision.
\end{proof}

\begin{THMMAIN2}
For every positive integer $\omega$, there is a function $f_{\omega}:\mathbb{N}\times\mathbb{N}\to \mathbb{R}$ satisfying the following. 
Let $\Gamma$ be an abelian group, let $A\subseteq \Gamma$ be a non-empty set with $\abs{\Gamma\setminus A}\le \omega$, and let $(G, \gamma)$ be a $\Gamma$-labelled graph. If $\arb_{(\Gamma, A)}(G, \gamma)\ge f_\omega(t,d)$, then $G$ contains an $(A,d)$-subdivision of $K_t$.
\end{THMMAIN2}

\begin{proof}
Set 
$$
\begin{aligned}
r_{\omega+1}(t)&:=t+{t\choose 2}(\omega^2+\omega-1),\\
r_{i-1}(t)&:=R(t,R(r_i(t);\omega))\text{ for~each}~i\in [\omega+1] ,\\
c_{i}(t)&:=\sum^{i-1}_{j=0}{r_j(t)\choose 2}\text{ for each }i\in [\omega+1], \text{ and}\\
f_\omega(t,d)&:=(r_0(t)+2c_{\omega+1}(t))2^{2(d+1)c_{\omega+1}(t)}.
\end{aligned}
$$
Clearly, $r_{i-1}(t)\ge r_{i}(t)$ for each $i\in [\omega+1]$.

Let $(G,\gamma)$ be a $\Gamma$-labelled graph with $\arb_{(\Gamma, A)}(G,\gamma)\ge f_\omega(t,d)$. Suppose for contradiction that $G$ does not contain an $(A, d)$-subdivision of $K_t$.

Since $\arb_{(\Gamma, A)}(G,\gamma)\ge f_\omega(t,d)$,  by Corollary~\ref{cor:longpath2} with $\ell:=\lfloor\frac{d+1}{2}\rfloor$ and $m:=4c_{\omega+1}(t)$, there is a sequence $S_0\supseteq S_1\supseteq  \cdots \supseteq S_m$ of sets of vertices in $G$ such that 
\begin{itemize}
    \item[(i)] for every $i\in [m]$ and every two distinct vertices $x$ and $y$ in $S_m$, there is an $S_m$-path of length at least $\ell$ in $G[S_m\cup (S_{i-1}\setminus S_i)]$ whose endpoints are $x$ and $y$, and 
    \item[(ii)] for every $i\in [m]$, $G[S_i]$ is connected and $\arb_{(\Gamma, A)}(G[S_i],\gamma)\ge \frac{\arb_{(\Gamma, A)}(G,\gamma)}{2^{i\ell}}$.
\end{itemize}
For each $j\in [m]$, let $L_j:=S_{j-1}\setminus S_j$. Then $L_i\cap L_j=\emptyset$ for distinct $i,j\in [m]$. 

Observe that
\[\arb_{(\Gamma, A)}(G[S_m],\gamma)\ge \frac{\arb_{(\Gamma, A)}(G,\gamma)}{2^{ m\ell }}\ge r_0(t)+2c_{\omega+1}(t).\] 
Let $T_1=\{v_1,\ldots,v_{r_0(t)}\}$ be a set of $r_0(t)$ vertices in $G[S_m]$ and let $T_2:=S_m\setminus T_1$. Since $\arb_{(\Gamma, A)}(G[S_m], \gamma)\ge r_0(t)+2c_{\omega+1}(t)$, by Lemma~\ref{lem:deletingvertex}, we have that  
    \[\arb_{(\Gamma, A)}(G[T_2],\gamma)\ge \arb_{(\Gamma,A)}(G[S_m],\gamma)-|T_1|\ge 2c_{\omega+1}(t).\] 
    By Lemma~\ref{lem:disjointcycles1}, $G[T_2]$ contains  $c_{\omega+1}(t)$ vertex-disjoint cycles which are of $\gamma$-value in $A$. Let $C_1, \ldots, C_{c_{\omega+1}(t)}$ be such cycles.

Let $\phi :{[r_{0}(t)]\choose 2}\rightarrow \left[{r_{0}(t)\choose 2}\right]$ be a bijection. By Corollary~\ref{cor:longpath3} (1), for each $U=\{a,b\}\in {[r_{0}(t)]\choose 2}$, there is a $(v_a,v_b)$-path $P_U$ of length at least $d$ such that  
\begin{itemize}
\item $V(P_{U})\subseteq \{v_a,v_b\}\cup C_{\phi(U)}\cup (L_{4\phi(U)}\cup L_{4\phi(U)-1}\cup L_{4\phi(U)-2}\cup L_{4\phi(U)-3})$,
\end{itemize}
by picking a vertex of $C_{\phi(U)}$.
Observe that all the paths in $\{P_U:U\in {[r_{0}(t)]\choose 2}\}$ are pairwise internally vertex-disjoint. 
Let $J:=\bigcup_{U\in {[r_{0}(t)]\choose 2}} P_U$.
Then $J$ is a subdivision of $K_{r_0(t)}$ whose branching paths have length at least~$d$.

Since $|\Gamma\setminus A|\le \omega$ and 
$r_{0}(t)=R(t,R(r_1(t);\omega))$, %
by Ramsey's theorem, $J$ contains a subgraph $J_1$ such that
\begin{enumerate}
    \item $J_1$ is either 
\begin{itemize}
    \item a subdivision of $K_{t}$ whose branching paths are of $\gamma$-value in $A$,  or
    \item a subdivision of $K_{r_1(t)}$ whose branching paths are of $\gamma$-value $g$ for some $g\in \Gamma\setminus A$, and
\end{itemize}
\item every branching path of $J_1$ is a branching path of $J$.
\end{enumerate}

As $G$ does not contain an $(A,d)$-subdivision of $K_{t}$,  $J_1$ is a subdivision of $K_{r_1(t)}$ whose branching paths are of $\gamma$-value $g$ for some $g\in \Gamma\setminus A$. If $\langle g\rangle \cap A\neq \emptyset$, then by Lemma~\ref{lem:spanA}, $G$ contains an $(A,d)$-subdivision of $K_t$, a contradiction. Thus, we may assume that 
$\langle g\rangle \cap A= \emptyset$.

Note that $K_{r_1(t)}$ is the same as $K^1_{r_1(t)}$. Thus, $J_1$ is a subdivision of $K^1_{r_1(t)}$ whose branching paths are of $\gamma$-value $g$ for some $g\in \Gamma\setminus A$, and every branching path of $J_1$ is a branching path of $J$. So, every branching path of $J_1$ has length at least $d$.

Let $k$ be the maximum integer such that $G$ contains a subdivision $F$ of $K^k_{r_k(t)}$ with branching vertices $v_1^*, \ldots, v_{r_k(t)}^*$ and there are distinct elements $a_1, \ldots, a_k\in \Gamma\setminus A$ where 
\begin{itemize}
\item [(a)] the branching vertices of $F$ are contained in $T_1$, non-branching vertices are not in $T_1$, and each branching path has length at least $d$,
\item[(b)] $
    V(F) \subseteq  T_1  \cup \left(\bigcup_{i\in [c_{k}(t)]}V(C_i) \right)
      \cup \left(\bigcup_{i\in [4c_{k}(t)]}L_i\right)$,
\item [(c)] for each $U=\{a,b\}\in {[r_k(t)]\choose 2}$ and $j\in [k]$, there is a unique branching path $P^j_U$ of $F$ whose endpoints are $v^*_a$ and $v^*_b$ where $\gamma(P^j_U)=a_j$, and
   
    \item[(d)]  $\langle \{a_1,\ldots,a_{k}\}\rangle\cap A= \emptyset$. 
    \end{itemize}
    Since $\abs{\Gamma\setminus A}\le \omega$, we have $k\le \omega$. 

    Now, we construct one more path between every pair of branching vertices of $F$, which has $\gamma$-value not in  $\langle \{a_1,\ldots,a_{k}\}\rangle$.

Let $\phi^* :{[r_{k}(t)]\choose 2}\rightarrow \left[{r_{k}(t)\choose 2}\right]$ be a bijection. By Corollary~\ref{cor:longpath3} (2) with $\Lambda=\langle \{a_1, \ldots, a_k\}\rangle$, for each $U=\{a,b\}\in {[r_{k}(t)]\choose 2}$, there is a $(v_a^*,v_b^*)$-path $P_U^*$ of $\gamma$-value in $\Gamma\setminus \Lambda$ and of length at least $d$ such that  
\[V(P_{U}^*)\subseteq \{v_a^*,v_b^*\}\cup C_{c_{k}(t)+\phi^*(U)}\cup \left(\bigcup_{i\in [4]}L_{4c_{k}(t)+4\phi^*(U)-4+i}\right).\]
Observe that all the paths in $\{P_U^*:U\in {[r_{k}(t)]\choose 2}\}$ are pairwise internally vertex-disjoint.    
Let $J^*:=\bigcup_{U\in {[r_{k}(t)]\choose 2}} P_U^*$.
Then $J^*$ is a subdivision of $K_{r_k(t)}$. By construction, $V(J^*)\cap V(F)=\{v_1^*, \ldots, v_{r_k(t)}^*\}$.

Since $|\Gamma\setminus A|\le \omega$ and 
$r_{k}(t)=R(t,R(r_{k+1}(t);\omega))$, %
by Ramsey's theorem, $J^*$ contains a subgraph $J_1^*$ such that
\begin{enumerate}
    \item $J_1^*$ is either 
\begin{itemize}
    \item a subdivision of $K_{t}$ whose branching paths are of $\gamma$-value in $A$,  or
    \item a subdivision of $K_{r_{k+1}(t)}$ whose branching paths are of $\gamma$-value $g$ for some $g\in \Gamma\setminus A$, and
\end{itemize}
\item every branching path of $J_1^*$ is a branching path of $J^*$.
\end{enumerate}

As $G$ does not contain an $(A,d)$-subdivision of $K_{t}$,  $J_1^*$ is a subdivision of $K_{r_{k+1}(t)}$ whose branching paths are of $\gamma$-value $g$ for some $g\in \Gamma\setminus A$. Since $g\in \Gamma\setminus \Lambda$, we have $g\notin \langle \{a_1, \ldots, a_k\} \rangle$. 
If $\langle \{a_1, \ldots, a_k, g\} \rangle\cap A= \emptyset$, then by considering $a_{k+1}=g$, $F\cup J_1^*$ is a subdivision of $K^{k+1}_{r_{k+1}(t)}$ satisfying the conditions (a), (b), (c), (d), which contradicts the maximality of $k$. Thus, we may assume that 
$\langle \{a_1, \ldots, a_k, g\} \rangle\cap A\neq \emptyset$. Let $a_{k+1}=g$.
Observe that 
$\ord_\Gamma(a_j)\le \omega$ for each  $j\in [k+1]$; otherwise, $\langle a_j\rangle$ must intersect $A$. 

Let $F^*:=F\cup J_1^*$.
Now, we finish the proof by showing that $F^*$ contains an $(A,d)$-subdivision of~$K_t$. Note that $F^*$ is a subdivision of $K_{r_{k+1}(t)}^{k+1}$. Let $\{z_1, \ldots, z_{r_{k+1}(t)}\}$ be the set of branching vertices of $F^*$.
By construction, 
for each $U=\{a,b\}\in {[r_{k+1}(t)]\choose 2}$ and $j\in [k+1]$, there is a unique branching path $Q^j_U$ of $F^*$ whose endpoints are $z_a$ and $z_b$ where $\gamma(Q^j_U)=a_j$.

Let $s\in \langle \{a_1,\ldots,a_{k+1}\}\rangle\cap A$. Note that $s\neq 0$, as $\langle a_1\rangle \cap A\neq \emptyset$ and $a_1$ has order at most $\omega$.
Thus, there are integers $p_1,\ldots,p_{k+1}$, where $p_j\le \omega$ for each $j\in [k+1]$ and at least one of them is nonzero, such that \[s=\sum^{k+1}_{j=1}p_j a_j.\] 
Let $\lambda:=\sum^{k+1}_{j=1}p_j$. Note that $\lambda\le (\omega+1)\omega$.

Let $\theta :{[t]\choose 2}\rightarrow \left[{t\choose 2}\right]$ be a bijection. For each $U=\{a,b\}\in {[t]\choose 2}$ with $a<b$, let $R_U$ be a path in $F^*$ from $z_a$ to $z_b$ such that 
\begin{itemize}
    \item $V(R_U)\cap \{z_i:i\in [r_{k+1}(t)]\}=\{z_a, z_b\}\cup \{z_i:t+(\theta(U)-1)(\lambda-1)+1\le i\le t+\theta(U)(\lambda-1)\}$, 
    \item for each $j\in [k+1]$, $R_U$ contains exactly $p_j$ many branching paths of $F^*$ of $\gamma$-value $a_j$.
\end{itemize}
Since $r_{k+1}(t)\ge r_{\omega+1}(t)=t+{t\choose 2}(\omega^2+\omega-1)\ge t+{t\choose 2}(\lambda-1)$, 
we can choose such a path $R_U$ for all $U\in {[t]\choose 2}$.
For each $U\in {[t]\choose 2}$, $\gamma(R_U)=\sum^{k+1}_{j=1}p_ja_j=s\in A.$ Moreover, by the construction, for distinct $U_1$ and $U_2$ in ${[t]\choose 2}$, $R_{U_1}$ and $R_{U_2}$ are internally vertex-disjoint.  Therefore, $\bigcup_{U\in{[t]\choose 2}}R_U$ is an $(A,d)$-subdivision of $K_t$, a contradiction.
\end{proof}

\begin{corollary}\label{cor:subdivision1}
Let $f_\omega$ be the function defined in Theorem~\ref{thm:findingkt}.
Let $\Gamma$ be an abelian group and let $A\subseteq \Gamma$ be a non-empty set such that $\Gamma\setminus A$ is a non-trivial subgroup of $\Gamma$. Let $\g$ be a $\Gamma$-labelled graph.  If $\arb_{(\Gamma,A)}(G,\gamma)\ge f_1(t,d)$, then $G$ contains an $(A, d)$-subdivision of $K_t$.
\end{corollary}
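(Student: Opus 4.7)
The plan is to mimic the reduction used in Corollary~\ref{cor:cycle1}: exploit the fact that $\Omega := \Gamma \setminus A$ is a non-trivial subgroup to pass to the quotient $\Gamma/\Omega$, where $A$ becomes the complement of a single element and Theorem~\ref{thm:findingkt} applies with $\omega = 1$.

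More concretely, let $\Omega := \Gamma \setminus A$ and let $\gamma^*$ be the induced $(\Gamma/\Omega)$-labelling of $(G, \gamma)$, defined by $\gamma^*(e) := \gamma(e) + \Omega$. Set $A^* := (\Gamma/\Omega) \setminus \{0 + \Omega\}$, so that $|(\Gamma/\Omega) \setminus A^*| = 1$. The key observation is that for any subgraph $H$ of $G$, $\gamma^*(H) = \gamma(H) + \Omega$, whence $\gamma(H) \in A$ if and only if $\gamma^*(H) \neq 0 + \Omega$. Applied cycle-by-cycle, this shows that a vertex partition of $G$ witnesses that no part contains a cycle of $\gamma$-value in $A$ if and only if it witnesses that no part contains a cycle of non-zero $\gamma^*$-value. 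Therefore
\[
\arb_{(\Gamma, A)}(G, \gamma) \;=\; \arb_{(\Gamma/\Omega,\, A^*)}(G, \gamma^*).
\]

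Armed with this identity, I would apply Theorem~\ref{thm:findingkt} to $(G, \gamma^*)$ with $\omega = 1$: the hypothesis $\arb_{(\Gamma, A)}(G, \gamma) \geq f_1(t, d)$ transfers to $\arb_{(\Gamma/\Omega, A^*)}(G, \gamma^*) \geq f_1(t, d)$, and the theorem yields a subdivision of $K_t$ in $G$ whose branching paths each have length at least $d$ and non-zero $\gamma^*$-value. But a branching path has non-zero $\gamma^*$-value precisely when its $\gamma$-value lies in $\Gamma \setminus \Omega = A$, so the same subdivision is an $(A, d)$-subdivision of $K_t$ in $(G, \gamma)$, as required.

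There is no substantive obstacle beyond verifying the quotient equality above, which is immediate from the fact that $\gamma(H) \in A \Leftrightarrow \gamma^*(H) \neq 0+\Omega$; all of the combinatorial work is done by the already-established Theorem~\ref{thm:findingkt} with $\omega = 1$.
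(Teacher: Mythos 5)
Your proposal is correct and follows essentially the same route as the paper: pass to the induced $(\Gamma/\Omega)$-labelling, observe that $\arb_{(\Gamma,A)}(G,\gamma)=\arb_{(\Gamma/\Omega,(\Gamma/\Omega)\setminus\{0+\Omega\})}(G,\gamma^*)$ since a subgraph has $\gamma$-value in $A$ exactly when its $\gamma^*$-value is non-zero, and then apply Theorem~\ref{thm:findingkt} with $\omega=1$ and pull the subdivision back. No gaps.
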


\begin{proof}
Let $f_w$ be the function defined in the proof of Theorem~\ref{thm:findingkt}.  Set $f(t,d)=f_1(t,d)$ for all $(t,d)\in \mathbb{N}\times \mathbb{N}$. 
Let $\Omega=\Gamma\setminus A$, and 
let $\gamma^*$ be the induced $(\Gamma/\Omega)$-labelling of $G$. 
For every induced subgraph $H$ of $G$, $(H, \gamma)$ has a cycle of $\gamma$-value in $A$ if and only if $(H, \gamma^*)$ has a cycle of non-zero $\gamma^*$-value. This implies that 
 \[\arb_{(\Gamma, A)}(G, \gamma)=\arb_{(\Gamma/\Omega, (\Gamma/\Omega)\setminus \{0+\Omega\})}(G, \gamma^*).\]

Suppose $\arb_{(\Gamma,A)}(G,\gamma)\ge f_1(t,d)$. Then $\arb_{(\Gamma/\Omega, (\Gamma/\Omega)\setminus \{0+\Omega\})}(G, \gamma^*)\ge f_1(t,d)$. By Theorem~\ref{thm:findingkt}, $G$ contains an $( (\Gamma/\Omega)\setminus \{0+\Omega\}, d)$-subdivision $H$ of $K_t$. Thus $H$ is an $(A, d)$-subdivision of $K_t$. 
\end{proof}

Now, we show that if $\Gamma$ has bounded number of elements of order $2$, then every $(A, 1)$-subdivision of $K_t$ with large $t$ contains an $(A,k)$-cycle. 
\begin{theorem}
For positive integers $p$ and $\omega$, there is a function $f_{(\omega,p)}:\mathbb{N}\to \mathbb{R}$ satisfying the following. 
Let $\Gamma$ be an abelian group with $|\{g:\ord_\Gamma(g)\le 2\}|\le p$. Let $A\subseteq \Gamma$ be a non-empty set with $\abs{\Gamma\setminus A}\le\omega$. Let $(G, \gamma)$ be a $\Gamma$-labelled graph. If $t\ge f_{(\omega,p)}(k)$, then every $(A, 1)$-subdivision of $K_t$ in $G$ contains an $(A, k)$-cycle. 
\end{theorem}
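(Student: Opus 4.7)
The plan is to translate the $(A,1)$-subdivision $F$ of $K_t$ into an edge-labelled complete graph, apply a Ramsey-type reduction to handle the possibly infinite colour set $A$, and then use the algebraic structure of $\Gamma$ (in particular the bounded $2$-torsion) to construct a cycle of length at least $k$ with $\gamma$-value in $A$. Let $v_1,\dots,v_t$ be the branching vertices of $F$ and let $P_{ij}$ denote the branching path between $v_i$ and $v_j$. Define an edge labelling $f\colon E(K_t)\to A$ by $f(v_iv_j):=\gamma(P_{ij})$. Any cycle of $K_t$ that uses $\ell$ edges lifts, via the branching paths, to a cycle of $F$ of length at least $\ell$ (each branching path has length at least $1$) whose $\gamma$-value equals the sum of the corresponding $f$-values. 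Hence it suffices to show that, for $t$ large enough in terms of $\omega,p,k$, every labelling $f\colon E(K_t)\to A$ admits a cycle of length at least $k$ in $K_t$ with $f$-sum in $A$.

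To deal with the possibly infinite colour set $A$, I would apply the Erd\H{o}s-Rado canonical Ramsey theorem to $f$: for $t$ sufficiently large there is a vertex set $S$ of size $s$ on which $f|_{\binom{S}{2}}$ is canonical, i.e.\ constant, rainbow, or depending only on the smaller (respectively larger) endpoint in some linear order on $S$. I would choose $s$ large in terms of $\omega,p,k$ and handle each canonical case.

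The monochromatic case $f\equiv a\in A$ is the cleanest. Every $\ell$-cycle in $K_s$ has sum $\ell a$, and since $a\in\langle a\rangle\cap A$ this intersection is nonempty. If $\ord_\Gamma(a)\ge 3$ or $a$ has infinite order, then by periodicity some $\ell\in[k,k+\omega]$ (respectively $\ell\in[k,k+\ord_\Gamma(a)]$) satisfies $\ell a\in A$; picking any $\ell$-cycle in $K_s$ for this $\ell$ does the job. The exceptional cases $\ord_\Gamma(a)\le 2$ concern at most $p$ elements of $\Gamma$ by hypothesis; for each such $a$ we have $\ell a\in\{0,a\}\subseteq A$ by choosing $\ell\in\{k,k+1\}$ of the appropriate parity. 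In the rainbow and endpoint-dependent canonical cases, varying the cycle over $S$ yields many distinct sums in $\Gamma$, and since $|\Gamma\setminus A|\le\omega$, pigeonhole forces at least one cycle of length $\ge k$ to have $f$-sum in $A$.

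The main obstacle will be the endpoint-dependent canonical case, where each vertex's label $g(v_i)$ contributes to a cycle's sum with multiplicity $0$, $1$, or $2$ depending on its ordinal position in the cycle. Here the bounded 2-torsion hypothesis $|\{g\in\Gamma:\ord_\Gamma(g)\le 2\}|\le p$ plays an essential role: it controls the kernel of the doubling map $g\mapsto 2g$ and thus ensures that the "doubled" contributions $2g(v_i)$ do not collapse the image of the sum map into the small set $\Gamma\setminus A$, making the pigeonhole argument go through and yielding a cycle of length at least $k$ with sum in $A$.
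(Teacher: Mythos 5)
Your reduction to an edge-labelled complete graph and your monochromatic case are essentially fine (modulo one slip: you write $\ell a\in\{0,a\}\subseteq A$ when $\ord_\Gamma(a)\le 2$, but $0$ need not lie in $A$; you must choose $\ell\in\{k,k+1\}$ of the parity giving $\ell a=a$). The genuine gap is in the rainbow and endpoint-dependent canonical cases, which is where all the content of the theorem lives. You assert that ``varying the cycle over $S$ yields many distinct sums in $\Gamma$'', but this does not follow from the edge labels being distinct: injectivity of $f$ on pairs says nothing about the cycle-sum map, whose values can collapse heavily a priori (for instance, rainbow labels of the form $f(uv)=a_u+a_v$ give sums depending only on the vertex set of the cycle, and more adversarial collapses are conceivable). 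To rule out that every cycle of length at least $k$ has sum in the set $\Gamma\setminus A$ of size at most $\omega$, you need an algebraic identity isolating \emph{twice} a single label as a signed combination of boundedly many cycle sums, all of length at least $k$: e.g.\ fix a path $v_1\cdots v_{k-1}$ and observe that the sum of the two $(k+1)$-cycles $v_1\cdots v_{k-1}uwv_1$ and $v_1\cdots v_{k-1}wuv_1$ minus the sum of the two $k$-cycles $v_1\cdots v_{k-1}uv_1$ and $v_1\cdots v_{k-1}wv_1$ equals $2f(uw)$. Then $2f(uw)$ takes at most $\omega^4$ values, and since the doubling map $x\mapsto 2x$ has kernel $\{g:\ord_\Gamma(g)\le 2\}$ of size at most $p$, the label $f(uw)$ takes at most $p\omega^4$ values, contradicting the rainbow property for $s$ large; an analogous computation (monotone cycles with varying least vertex, giving sums $2g(i_1)+\mathrm{const}$) is needed in the min/max cases. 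This doubling identity is precisely the engine of the paper's proof, where it appears as $2\gamma(P_{\{a,b\}})=\gamma(\text{triangle})+\gamma(\text{triangle})-\gamma(\text{4-cycle})$; your sketch leaves exactly this step as a heuristic (``controls the kernel of the doubling map \dots making the pigeonhole argument go through''), so as written the proof is incomplete at its crux.

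For comparison, the paper avoids canonical Ramsey entirely: it splits the branching vertices into $\omega+2$ vertex-disjoint sub-subdivisions of $K_r$ plus spare vertices, shows each sub-subdivision contains \emph{some} cycle of $\gamma$-value in $A$ (via the doubling identity, two applications of ordinary Ramsey with $\omega^3$ and $p$ colours, and a case analysis on $\ord_\Gamma(s+g)$), and then links the resulting $\omega+2$ disjoint $A$-cycles by long paths through the spare branching vertices, finishing with a pigeonhole over $\omega+1$ telescoping cycles to produce one of length at least $k$ and value in $A$. Your one-shot canonical-Ramsey architecture could likely be made to work, since all cycles in the identities above can be kept of length at least $k$, but the missing identities must be supplied; once they are, ordinary Ramsey on the resulting bounded colour sets suffices, so canonical Ramsey is not actually needed.
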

\begin{proof}
Let 
$$ 
\begin{aligned}
\mu:&=\max\{(\omega+2)^2,2\omega+6\},\\
\beta:& =R(\mu;p),\\
r:&=R(\beta;\omega^3),\\
f_{(\omega,p)}(k):&=(r+k)(\omega+2). 
\end{aligned}
$$

Let $F$ be an $(A, 1)$-subdivision of $K_t$ contained in $G$ with $t\ge f_{(\omega,p)}(k)$. Let $\{u_1, \ldots, u_{(r+k)(\omega+2)}\}$ be a set of $(r+k)(\omega+2)$ branching vertices of $F$. Then $F$ contains 
$\omega+2$ vertex-disjoint $(A, 1)$-subdivisions of $K_r$, say $F_1,\ldots,F_{\omega+2}$, where their branching vertices are contained in $\{u_i:i\in [r(\omega+2)]\}$.

We claim that for every $H\in \{F_1,\ldots,F_{\omega+2}\}$, we have  $\arb_{(\Gamma,A)}(H,\gamma)\ge 2$, that is, $H$ contains a cycle of $\gamma$-value in $A$. Suppose for contradiction that  $\arb_{(\Gamma,A)}(H,\gamma)=1$.

Let $\{v_1,\ldots,v_r\}$ be the set of branching vertices of $H$. Let $P_{\{a,b\}}$ be the branching path in $H$ between $v_a$ and $v_b$ for each $\{a,b\}\in {[r]\choose 2}$. Then 
for each $\{x,y\} \in {[r]\choose 2}$ with $\{x,y\}\cap\{a,b\}=\emptyset$, we have that 
\begin{align*}
    2\gamma(P_{\{a,b\}})&=\gamma\left(P_{\{a,b\}}\cup P_{\{b,x\}}\cup P_{\{x,a\}}\right)+\gamma\left(P_{\{a,b\}}\cup P_{\{b,y\}}\cup P_{\{y,a\}}\right)\\
    &-\gamma\left(P_{\{y,a\}}\cup P_{\{a,x\}}\cup P_{\{x,b\}}\cup P_{\{b,y\}}\right).
\end{align*}
Let \[\mathcal{S}:=\left\{s_1+s_2-s_3: s_i\in \Gamma\setminus A \text{~for ~each}~i\in [3]\right\}.\]  Then $2\gamma(P_{U})\in \mathcal{S}$ for each $U\in {[r]\choose 2}$. Since $|\Gamma\setminus A|\le \omega$, we have that $|\mathcal{S}|\le \omega^3$. 

 Since
$r:=R(\beta;\omega^3)$,
by Ramsey's theorem, $H$ contains a subgraph $H_1$ which is a subdivision of $K_{\beta}$ with branching vertices $w_1,\ldots,w_{\beta}$ satisfying that 
\begin{itemize}
\item each branching path of $H_1$ is a branching path of $H$, and
\item $2\gamma(Q_{U_1})=2\gamma(Q_{U_2})$ for distinct $U_1$ and $U_2$ in ${[\beta]\choose 2}$.
\end{itemize}
where for each $\{a,b\}\in {[\beta]\choose 2}$, $Q_{\{a,b\}}$ is the branching path of $H_1$ between $w_a$ and $w_b$.  

Let $B:=\{g:\ord_\Gamma(g)\le 2\}$. Let $s\in \{\gamma(P_U):U\in {[\beta]\choose 2}\}$, and for each $U\in {[\beta]\choose 2}$,
let $g_U=\gamma(Q_U)-s$. Observe that $g_U\in B$, because $2g_U=2\gamma(Q_u)-2s=0$.
Since
$\beta:=R(\mu;p)$,
by Ramsey's theorem, $H_1$ contains a subgraph $H_2$ which is a subdivision of $K_{\mu}$ with branching vertices $z_1,\ldots,z_{\mu}$ satisfying that there is $g\in B$ such that 
\begin{itemize} 
\item each branching path of $H_2$ is a branching path of $H_1$ and has $\gamma$-value $s+g\in A$.
\end{itemize}
For each $\{a,b\}\in {[\mu]\choose 2}$, let $Q^*_{\{a,b\}}$ be the branching path of $H_2$ between $z_a$ and $z_b$.

Let $q\in \{3,4,\ldots, \omega+4\}$. Let $U=\{a,b\}\in {[\mu]\choose 2}$.  Since $\mu\ge(\omega+2)^2$, there are $\omega+1$ pairwise internally vertex-disjoint $(z_a,z_b)$-paths, say $R_1,\ldots,R_{\omega+1}$, where each path contains exactly $q$ branching vertices of $H_2$. For each $i\in [\omega+1]$, let $C_i:=Q^*_U\cup R_i$.
Since $\arb_{(\Gamma,A)}(H,\gamma)=1$, we have that $\gamma(C_i)\in \Gamma\setminus A$ for each $i\in [\omega+1]$.  Since $|\Gamma\setminus A|\le \omega$, there are $C_i$ and $C_j$ such that $\gamma(C_i)=\gamma(C_j)\in \Gamma\setminus A$ for some distinct $i,j\in [\omega+1]$ which implies that
\[\gamma(R_i)=\gamma(R_j).\]
Observe that $R_i\cup R_j$ forms a cycle in $H_2$ which contains exactly $2(q-1)$ branching vertices. Thus \[\gamma(R_i\cup R_j)=\gamma(R_i)+\gamma(R_j)=2\gamma(R_j)=2(q-1)(s+g) \in \Gamma\setminus A.\]
Therefore, we summarize that $\{2(q-1)(s+g):3\le q\le \omega+4\}\subseteq \Gamma\setminus A$. We finish the proof by distinguishing the order of $s+g$ in $
\Gamma$.

Suppose that $\ord_{\Gamma}(s+g)\ge 2(\omega+3)$. Then $|\{2(k-1)(s+g):3\le k\le \omega+4\}|=\omega+1>\omega=|\Gamma\setminus A|$, a contradiction. Hence, assume that $\ord_\Gamma(s+g)\le 2\omega+5$. 

Since $\mu\ge 2\omega+6$, $H_2$ contains a cycle $C$ which contains exactly $(\ord_\Gamma(s+g)+1)$ branching vertices. We have that $\gamma(C)=(\ord_\Gamma(s+g)+1)(s+g)=s+g\in A$ contradicting the fact that $\arb_{(\Gamma,A)}(H,\gamma)=1$.

Let $D_1, \ldots, D_{\omega+2}$ be the cycles of $\gamma$-values in $A$, that are contained in $F_1, , \ldots, F_{\omega+2}$, respectively. 
For each $i\in [\omega+2]$, let $a_i$ and $b_i$ be two distinct vertices of $D_i$. 
Denote by $X_i$ and $X'_i$ the two internally vertex-disjoint paths from $a_i$ to $b_i$ contained in $C_i$.

Let $\ell=\lfloor \frac{k}{2}\rfloor$.
For every $i\in [\omega+1]$, 
\begin{itemize}
    \item let ${Y}_{i}$ be a path of $F$ where its endpoints are $a_i$ and $a_{i+1}$ and it contains exactly $\ell+1$ branching vertices of $F$,
    \item let ${Z}_{i}$ be a path of $F$ where its  endpoints are $b_i$ and $b_{i+1}$ and it contains exactly $\ell+1$ branching vertices of $F$, 
    \item $Y_i$ and $Z_i$ are vertex-disjoint, and the branching vertices of $F$ in $Y_i\cup Z_i-\{a_i, a_{i+1}, b_i, b_{i+1}\}$ are contained in $\{u_j:r(\omega+2)+(i-1)k+1\le j\le r(\omega+2)+ik\}$.
\end{itemize}
Note that the paths in $\{Y_i:i\in [\omega+1]\}\cup \{Z_i:i\in [\omega+1]\}$ are pairwise internally vertex-disjoint.

For every $i\in [\omega+1]$, let 
\[H_i=X_1\cup X_{i+1}\cup \bigcup_{1\le j\le i}(Y_{j}\cup Z_{j}).\]  
Note that each $H_i$ has length at least $k$, because it contains at least two paths in $\{Y_i:i\in [\omega+1]\}\cup \{Z_i:i\in [\omega+1]\}$. 
If there is some $i\in [\omega+1]$ such that $\gamma(H_i)\in A$, then we are done. Therefore, we may assume that $\gamma(H_i)\in \Gamma\setminus A$ for each $i\in [\omega+1]$.

Since $|\Gamma\setminus A|\le \omega$, by the pigeonhole principle, there are distinct integers $a, b\in [\omega+1]$ such that $\gamma(H_a)=\gamma(H_b)\in \Gamma\setminus A$. Assume $a<b$ without loss of generality. 
Then, we have that 
\[\gamma(X_{a+1})=\gamma\left( X_{b+1}\cup \bigcup_{a+1\le j\le b}(Y_{j}\cup Z_{j})\right).\] Let $C=X'_{a+1}\cup  X_{b+1}\cup \bigcup_{a+1\le j\le b}(Y_{j}\cup Z_{j})$. Since $X_{a+1}\cup X'_{a+1}=D_{a+1}$, we have that 
\[\gamma(C)=\gamma(X_{a+1}')+\gamma\left( X_{b+1}\cup \bigcup_{a+1\le j\le b}(Y_{j}\cup Z_{j})\right)=\gamma(X_{a+1}')+\gamma(X_{a+1})=\gamma (D_{a+1})\in A.\] Furthermore, since $b>a$, the length of $C$ is at least $k$.  Therefore, $C$ is the desired cycle.
\end{proof}


\begin{thebibliography}{1}

\bibitem{Aboulker2019}
P.~Aboulker, N.~Cohen, F.~Havet, W.~Lochet, P.S. Moura, and S.~Thomass\'{e}.
\newblock Subdivisions in digraphs of large out-degree or large dichromatic
  number.
\newblock {\em Electronic Journal of Combinatorics}, 26(3):3--19, 2019.

\bibitem{Chartrand1968}
G.~Chartrand, H.~V. Kronk, and C.~E. Wall.
\newblock The point-arboricity of a graph.
\newblock {\em Israel J. Math.}, 6:169--175, 1968.

\bibitem{Mader}
W.~Mader.
\newblock Homomorphieeigenschaften und mittlere {K}antendichte von {G}raphen.
\newblock {\em Math. Ann.}, 174:265--268, 1967.

\bibitem{Neumann1982}
V.~Neumann-Lara.
\newblock The dichromatic number of a digraph.
\newblock {\em J. Combin. Theory Ser. B}, 33(3):265--270, 1982.

\bibitem{Ramsey1930}
F.~P. Ramsey.
\newblock On a {P}roblem of {F}ormal {L}ogic.
\newblock {\em Proc. London Math. Soc. (2)}, 30(4):264--286, 1929.

\bibitem{Steiner2022}
R.~Steiner.
\newblock {Subdivisions with congruence constraints in digraphs of large
  chromatic number}.
\newblock arXiv:2208.06358, 2022.

\bibitem{Thomassen1983}
C.~Thomassen.
\newblock Graph decomposition with applications to subdivisions and path
  systems modulo {$k$}.
\newblock {\em J. Graph Theory}, 7(2):261--271, 1983.

\end{thebibliography}
\end{document}